\theoremstyle{definition}
\newtheorem{theo}{Theorem}[section]
\newtheorem{theo*}{Theorem}
\newtheorem{defi*}{Definition}
\newtheorem{lemm}[theo]{Lemma}
\newtheorem{lemm*}{Lemma}
\newtheorem{prop}[theo]{Proposition}
\newtheorem{prop*}{Proposition}
\newtheorem{cor}[theo]{Corollary}
\newcommand{\Aut}{{\rm Aut}}
\newcommand{\Aute}{\Aut(X)_{\mathcal{E}}}
\newcommand{\zero}{{\bf 0}}
\newcommand{\wt}{{\rm wt}}
\newcommand{\even}{{\rm even}}
\newcommand{\odd}{{\rm odd}}
\newcommand{\MOD}{{\rm mod}\,\,}
\newcommand{\rank}{{\rm rank}\,}
\newcommand{\Ker}{{\rm Ker}\,}
\newcommand{\SpanS}{\langle S \rangle}
\newcommand{\cN}{\mathcal{N}}
\title{Strongly regular graphs with the same parameters \\ as the symplectic graph}
\author{Sho Kubota}
\date{}
\begin{document}
\maketitle
\begin{abstract}
We consider orbit partitions of groups of automorphisms for the symplectic graph 
and apply Godsil-McKay switching.
As a result,
we find four families of strongly regular graphs 
with the same parameters as the symplectic graphs,
including the one discovered by Abiad and Haemers.
Also,
we prove that 
switched graphs are non-isomorphic to each other 
by considering the number of common neighbors of three vertices. 
\vspace{10pt} \\
Keywords: cospectral graphs; switching; strongly regular graph; symplectic graphs. \\
MSC Codes: 05E30; 05B20; 05C50; 05E18.
\end{abstract}

\section{Introduction}

Godsil-McKay switching is often used to construct cospectral graphs.
However,
to apply that,
a partition of the vertex set of a graph has to satisfy two very strong conditions.
The orbit partition of a group of automorphisms satisfies one of them automatically,
so if we can find the orbit partition which satisfies the other one,
we can apply Godsil-McKay switching and we might be able to get cospectral graphs.

For the symplectic graph $Sp(2\nu,2)$,
Abiad and Haemers \cite{AH} considered a special 4-subset $S$ and 
the partition $\{S, V(Sp(2\nu,2)) \setminus S\}$.
And then by applying a Godsil-McKay swithcing,
they obtained many graphs with the same parameters as the symplectic graph.
We also aim to construct many graphs 
with the same parameters as the symplectic graph by applying Godsil-McKay switching,
but partitions of the vertex set we consider are 
the orbit partitions of groups of automorphisms.
In this paper,
we consider the following groups:
	\begin{itemize}
	\item The automorphism group that fixes the standard basis
	\item The automorphism group that fixes a special 4-subset by Abiad and Haemers
	\end{itemize}
As a result,
we obtain four families of strongly regular graphs 
with the same parameters as the symplectic graphs.
Also,
we see one of them is isomorphic to the one by Abiad and Haemers.
More precisely, we see the edges involved with switching are the same.

Additionally,
on the symplectic graph,
we can regard the set of common neighbors as 
the solution set of a system of linear equations.
From this point of view,
we investigate the number of common neighbors of three vertices 
as an invariant for isomorphism.
As a result,
we prove that the graphs in the five families,
which are the four switched ones and the original one,
are certainly all non-isomorphic.

\section{Preliminaries}

%

Let $\mathbb{F}_2^{2\nu}$ be the $2 \nu$-dimensional vector space over $\mathbb{F}_2$,
and let
\[ R = \begin{bmatrix} 0 & 1 \\ 1 & 0 \end{bmatrix}. \]
The {\it symplectic graph} $Sp(2\nu,2)$ over $\mathbb{F}_2$
is the graph defined by the following:
		\begin{align*}
V(Sp(2\nu,2)) &= \mathbb{F}_2^{2\nu} \setminus \{ \zero \}, \\
E(Sp(2\nu,2)) &= \{ xy \,|\, x^T K y = 1\},
		\end{align*}
where $K = I_\nu \otimes R$ ($I_\nu$ is the identity matrix of order $\nu$).
We see that $Sp(2\nu,2)$ is a strongly regular graph with parameters
$(2^{2\nu}-1,\, 2^{2\nu-1},\, 2^{2\nu-2},\, 2^{2\nu-2})$
by easy calculation.
%

In general,
the spectrum of a strongly regular graph is determined by its parameters.
Conversely,
parameters are also characterized by the spectrum.
Therefore if a graph $X'$ has the same spectrum as a strongly regular graph $X$,
then $X'$ is also strongly regular with the same parameters as $X$.
In particular,
if we can find a graph $X$
with the same spectrum as the symplectic graph $Sp(2\nu,2)$ 
which is not isomorphic to $Sp(2\nu,2)$,
then $X$ is a strongly regular graph 
with the same parameters as $Sp(2\nu,2)$
and $X$ could possibly be a new strongly regular graph.

Returning on the subject of the symplectic graph $Sp(2\nu,2)$,
when $\nu = 1$ we just have the complete graph $K_3$.
Also, $Sp(4,2)$ is a strongly regular graph with parameters $(15,8,4,4)$,
which is known to be determined by its parameters,
so we suppose that $\nu \geq 3$ in the rest of this paper.

Let $X$ be a graph 
and let $\pi = \{C_1, \dots, C_t \}$ be a partition of $V(X)$.
This partition $\pi$ is called an {\it equitable partition} if for all $i,j$
any two vertices in $C_i$ have the same number of neighbors in $C_j$.

Godsil and McKay \cite{GM} proved the following result 
on constructing cospectral graphs.

\begin{theo} {\it 
Let $X$ be a graph 
and let $\pi = \{C_1, \dots, C_t, D \}$ be a partition of $V(X)$.
Assume that $\pi$ satisfies the following two conditions: 
	\begin{enumerate}[(i)]
	\item $\{C_1, \dots, C_t \}$ is an equitable partition of $V(X) \setminus D$.
	\item For every $x \in D$ and every $i \in \{1, \dots ,t \}$
the vertex $x$ has either $0, \frac{1}{2}|C_i|$ or $|C_i|$ neighbors in $C_i$.
	\end{enumerate}
Construct a new graph $X'$ by interchanging adjacency and nonadjacency 
between $x \in D$ and the vertices in $C_i$ 
whenever $x$ has $\frac{1}{2}|C_i|$ neighbors in $C_i$.
Then $X$ and $X'$ have the same spectrum. }
\end{theo}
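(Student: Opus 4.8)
The plan is to exhibit an orthogonal matrix $Q$ with $Q^{-1} A Q = A'$, where $A$ and $A'$ are the adjacency matrices of $X$ and $X'$; since an orthogonal conjugation preserves eigenvalues, this immediately yields cospectrality. Ordering the vertices so that $A$ is written in blocks indexed by $C_1, \dots, C_t, D$, I would take $Q$ to be block diagonal, equal to $Q_i := \frac{2}{n_i} J_{n_i} - I_{n_i}$ on the block of $C_i$ (writing $n_i = |C_i|$) and equal to the identity on the block of $D$. A one-line check gives $Q_i^2 = I$ (because $J_{n_i}^2 = n_i J_{n_i}$), so $Q$ is a symmetric orthogonal involution and $Q^{-1} A Q = Q A Q$.

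The core of the argument is to compute the blocks of $Q A Q$ and match them against $A'$. Writing $A_{ij}$ for the $C_i$--$C_j$ block, the conjugated block is $Q_i A_{ij} Q_j$. I would first dispose of the blocks lying entirely inside $V(X) \setminus D$: expanding $Q_i = 2 P_i - I$ with $P_i = \frac{1}{n_i} J_{n_i}$, the equitable hypothesis (i) forces $A_{ij}$ to have constant row sums $r_{ij}$ and constant column sums $c_{ij}$, and the edge count $n_i r_{ij} = n_j c_{ij}$ then makes the cross terms cancel, giving $Q_i A_{ij} Q_j = A_{ij}$. Thus every $C$--$C$ block, and in particular every diagonal block, is left untouched, exactly as required of $A'$.

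Next I would treat the $C_i$--$D$ blocks, where conjugation reduces to the left multiplication $Q_i A_{iD}$ because $Q$ is the identity on $D$. Each column of $A_{iD}$ records the neighbors in $C_i$ of a fixed $x \in D$, so by hypothesis (ii) it is the zero vector, the all-ones vector, or a vector with exactly $n_i/2$ ones. Since $P_i$ projects onto the span of $\mathbf{1}$, one checks that $Q_i$ fixes $\mathbf{0}$ and $\mathbf{1}$ but sends any half-weight vector $v$ to $\mathbf{1} - v$, its complement. This is precisely the prescription defining $X'$: adjacency between $x$ and $C_i$ is inverted exactly when $x$ has $n_i/2$ neighbors in $C_i$, and is left alone otherwise. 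Hence $Q_i A_{iD}$ is the $C_i$--$D$ block of $A'$, and transposing handles the $D$--$C_i$ blocks, while the $D$--$D$ block is fixed because $Q$ acts as the identity there. Assembling the blocks gives $Q A Q = A'$, completing the proof.

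I expect the main obstacle to be the cancellation in the $C$--$C$ blocks: it is essential that the equitable partition controls \emph{both} the row and the column sums of each $A_{ij}$ and that these are tied together by the edge-counting identity, since otherwise the terms $P_i A_{ij} P_j$, $P_i A_{ij}$, and $A_{ij} P_j$ would not combine to leave $A_{ij}$ invariant. The $C_i$--$D$ analysis, by contrast, is a direct consequence of the three-way case split in (ii) together with the elementary action of $Q_i$ on $\mathbf{0}$, $\mathbf{1}$, and half-weight vectors.
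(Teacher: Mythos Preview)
Your argument is correct and is essentially the standard proof of Godsil--McKay switching, as given in the original paper \cite{GM}: the block-diagonal orthogonal involution $Q$ with $Q_i = \frac{2}{n_i}J_{n_i} - I_{n_i}$ on each $C_i$ and the identity on $D$ conjugates $A$ to $A'$, and your block-by-block verification is sound. Note, however, that the present paper does not supply its own proof of this theorem; it merely cites Godsil and McKay \cite{GM} and uses the result as a tool, so there is no in-paper argument against which to compare yours.
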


The operation that transforms $X$ into $X'$ is called {\it Godsil-McKay switching}.
We will call a partion $\pi$ of $V(X)$ a {\it Godsil-McKay partition}
if we can apply the above theorem with respect to $\pi$.
Also,
we will call the special cell $D$ a {\it Godsil-McKay cell} of $\pi$.

%
%

On the other hand,
the orbit partition of a subgroup of automorphisms of a graph forms 
an equitable partition,
so this automatically satisfies the condition (i) of 
Godsil-McKay switching no matter what orbit we choose as $D$.

Tang and Wan \cite{TW} determined the automorphism group of $Sp(2\nu,2)$.

\begin{prop} {\it
\[ \Aut(Sp(2\nu,2)) \simeq Sp_{2\nu}(\mathbb{F}_2) \]
where $Sp_{2\nu}(\mathbb{F}_2) = \{ A \in GL_{2\nu}(\mathbb{F}_2) \, | \, A^T K A = K \}.$}
\end{prop}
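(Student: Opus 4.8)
\section{Proof of Proposition (plan)}

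The plan is to exhibit a group isomorphism by showing that the natural map from $Sp_{2\nu}(\mathbb{F}_2)$ to $\Aut(Sp(2\nu,2))$ is well defined and bijective. First I would check the easy inclusion: any $A \in Sp_{2\nu}(\mathbb{F}_2)$ is invertible, hence $x \mapsto Ax$ permutes $\mathbb{F}_2^{2\nu} \setminus \{\zero\}$, and since $(Ax)^T K (Ay) = x^T (A^T K A) y = x^T K y$ it preserves adjacency; thus $A$ induces a graph automorphism. This map is a homomorphism, and it is injective because the nonzero vectors span $\mathbb{F}_2^{2\nu}$, so a matrix is determined by its action on the vertices. All the work therefore lies in surjectivity: every $\sigma \in \Aut(Sp(2\nu,2))$ must be shown to arise from a symplectic matrix.

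The key step, and the one I expect to be the main obstacle, is to recover the linear (projective) structure of $\mathbb{F}_2^{2\nu}$ from the graph alone, in a way that is manifestly invariant under $\sigma$. Over $\mathbb{F}_2$ the points of $PG(2\nu-1,2)$ are exactly the vertices of $Sp(2\nu,2)$, and its lines are the triples $\{x,y,x+y\}$ of nonzero vectors of a $2$-dimensional subspace. I would characterize these triples purely combinatorially as follows: three distinct vertices $x,y,z$ are collinear (that is, $x+y+z = \zero$) if and only if every other vertex $w$ is adjacent to an even number of them. The forward direction is immediate from $w^T K x + w^T K y + w^T K z = w^T K (x+y+z)$; the converse is where the real content sits, and here I would use non-degeneracy of the form together with a counting argument: if $u := x+y+z \neq \zero$, then $\{\, w : w^T K u = 1 \,\}$ has size $2^{2\nu-1}$, which for $\nu \geq 3$ far exceeds $3$, so some admissible $w \notin \{x,y,z\}$ witnesses an odd adjacency count.

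Once this characterization is in place, any graph automorphism $\sigma$ permutes the lines of $PG(2\nu-1,2)$ and is therefore a collineation. Since $2\nu \geq 6$, the projective dimension is at least $2$, so the \emph{Fundamental Theorem of Projective Geometry} applies and $\sigma$ is induced by a semilinear transformation; as $\mathbb{F}_2$ has no nontrivial field automorphism, this is an honest linear map, i.e.\ $\sigma(x) = Ax$ for some $A \in GL_{2\nu}(\mathbb{F}_2)$. It remains to pin down the form. Because $\sigma$ preserves adjacency, $x^T K y = 1$ holds exactly when $x^T (A^T K A) y = 1$; over $\mathbb{F}_2$ this forces the two bilinear forms to agree on all pairs, hence $A^T K A = K$ and $A \in Sp_{2\nu}(\mathbb{F}_2)$. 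This closes surjectivity and completes the isomorphism.
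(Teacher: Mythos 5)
The paper gives no proof of this proposition at all: it is quoted as a known result of Tang and Wan \cite{TW}, so there is nothing in the text to compare your argument against line by line. That said, your sketch is a correct, self-contained proof and is essentially the standard one. The easy inclusion, the homomorphism property, and injectivity are all fine. The heart of the matter is, as you say, surjectivity, and your combinatorial characterization of collinearity is sound in both directions: if $x+y+z=\zero$ then $w^TKx+w^TKy+w^TKz=w^TK\zero=0$ forces an even adjacency count for every $w$, and if $u=x+y+z\neq\zero$ then the $2^{2\nu-1}$ vertices $w$ with $w^TKu=1$ (none of which is $\zero$, and at most three of which lie in $\{x,y,z\}$) each see an odd number of $x,y,z$. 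Since this condition is stated purely in terms of adjacency, every graph automorphism preserves lines; over $\mathbb{F}_2$ you could even skip the Fundamental Theorem of Projective Geometry, since ``$\sigma$ maps $\{x,y,x+y\}$ to a line'' already says $\sigma(x+y)=\sigma(x)+\sigma(y)$, which together with $\sigma(\zero)=\zero$ is additivity and hence $\mathbb{F}_2$-linearity directly. The final step also needs the small observation that the forms agree on the diagonal (both $K$ and $A^TKA$ are alternating, so $e_i^TKe_i=e_i^TA^TKAe_i=0$), after which agreement on all basis pairs gives $A^TKA=K$. In short: your route supplies a proof where the paper merely supplies a citation, and it buys a fully elementary argument for $\nu\geq 2$; the only caveat is that it is your proof, not a reconstruction of the one in \cite{TW}.
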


However, $Sp(2\nu,2)$ is vertex-transitive.
We aim to find Godsil-McKay cells in the orbit partition of a group of automorphisms,
so we have to choose a proper subgroup of the automorphism group.


\section{Automorphisms that fix the standard basis}

In this section,
we consider the subgroup of automorphisms that
fix the set of the standard basis of $\mathbb{F}_2^{2\nu}$.
To apply Godsil-McKay switching,
we determine the orbit partition and
confirm that it is a Godsil-McKay partition.
After that,
we prove that a switched graph is not isomorphic to the original symplectic graph.

Let $X$ be the symplectic graph $Sp(2\nu,2)$ of order $2\nu$
and let $e_i$ be the vector in $\mathbb{F}_2^{2\nu}$
with a 1 in the $i$th coordinate and 0's elsewhere and
put $\mathcal{E} = \{ e_1, \dots, e_{2\nu} \}$.
Also, let 
\[ \Aute = \{ g \in \Aut(X) \, | \, \mathcal{E}^g = \mathcal{E} \} . \]

\subsection{Determination of the orbit partition of $\Aute$}

Let $P$ be a permutation matrix of order $\nu$ and
let $A_1, \dots, A_{\nu}$ be matrices of order 2.
We define the matrix $P(A_1,\dots,A_{\nu})$ of order $2\nu$ as follows:
	\begin{align*}
P_{ij} \mapsto
\begin{cases} 
O \quad &\text{if $P_{ij} = 0$,} \\
A_i \quad &\text{if $P_{ij} = 1$,}
\end{cases}
	\end{align*}
where $O$ is the zero matrix.
Note that $K=I_{\nu}(R,\dots,R)$.

\begin{lemm}{\it
Let $A_1,\dots,A_{\nu}$ be matrices of order 2 over $\mathbb{F}_2$
and set
\[ B = \begin{bmatrix}
A_1 \\
\vdots \\
A_{\nu}
\end{bmatrix}.
 \]
Suppose the two column vectors $b_1,b_2$ of $B$ satisfy the following conditions:
	\begin{itemize}
	\item $b_1 \neq b_2$,
	\item $\wt(b_1) = \wt(b_2) = 1$,
where the weight $\wt(x)$ of a vector $x$ is the number of non-zero components of $x$.
	\end{itemize}
If $A_1^T R A_1 + \dots + A_{\nu}^T R A_{\nu} = R$,
then there exists a unique $i \in [\nu]$ such that 
$A_i \in \{ I_2,\, R \}$ and $A_j = O$ for all $j \in [\nu] \setminus \{i\}$.
}
\end{lemm}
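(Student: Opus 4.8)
The plan is to reduce everything to the scalar identity $A^T R A = (\det A)\, R$, valid for every $2\times 2$ matrix $A$ over $\mathbb{F}_2$, and then read off the conclusion from the two weight conditions. First I would record this identity: writing $A = \begin{bmatrix} a & b \\ c & d \end{bmatrix}$, a direct multiplication gives $A^T R A = (ad+bc)R$, the diagonal entries vanishing because they have the form $2ac$ and $2bd$; over $\mathbb{F}_2$ one has $ad + bc = ad - bc = \det A$. Summing over the blocks, the hypothesis $A_1^T R A_1 + \dots + A_\nu^T R A_\nu = R$ collapses to the single scalar equation $\sum_{k=1}^{\nu} \det A_k = 1$ in $\mathbb{F}_2$.

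Next I would translate the weight conditions into a statement about where the nonzero entries of $B$ live. Since $b_1$ is the first column of $B$ and $\wt(b_1) = 1$, exactly one block, say of index $p$, has a nonzero first column, and that column is one of the two standard basis vectors of $\mathbb{F}_2^2$; every other block has zero first column. Likewise there is a unique index $q$ carrying the (weight-one) second column, all other blocks having zero second column.

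Then I would split into two cases according to whether $p = q$. If $p \neq q$, then $A_p$ has a zero second column and $A_q$ has a zero first column, so $\det A_p = \det A_q = 0$; since all remaining $A_k$ vanish, $\sum_k \det A_k = 0$, contradicting the scalar equation. Hence $p = q =: i$, and this $i$ is the unique nonzero block. Finally, $A_i$ has two weight-one columns, and the condition $b_1 \neq b_2$ forces these columns to be the two \emph{distinct} standard basis vectors (equivalently $\det A_i = 1$, which the scalar equation also demands), so $A_i$ is a permutation matrix, i.e.\ $A_i \in \{I_2, R\}$, while $A_j = O$ for all $j \in [\nu] \setminus \{i\}$.

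The main obstacle — really the only substantive point — is spotting the identity $A^T R A = (\det A)\, R$; once it is in hand, the weight hypotheses leave essentially no freedom and the case analysis is immediate. It is worth noting that the distinctness $b_1 \neq b_2$ and the form hypothesis $\sum_k \det A_k = 1$ both independently rule out the degenerate configurations, so the argument is robust to how one chooses to close the final step.
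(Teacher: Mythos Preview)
Your proof is correct. The paper follows the same two-case split---whether the two nonzero entries of $B$ lie in a single block $A_i$ or in two different blocks---but disposes of the mismatched case by the bare observation that a $2\times 2$ matrix with a single nonzero entry satisfies $A^T R A = O$, checked directly. Your version instead records the identity $A^T R A = (\det A)\,R$ over $\mathbb{F}_2$ and collapses the matrix hypothesis to the scalar equation $\sum_k \det A_k = 1$; the weight conditions then force $p=q$ and $\det A_i = 1$ with no further computation. The two arguments are logically equivalent, but yours isolates the single algebraic fact driving both the contradiction and the final identification of $A_i$, giving a cleaner packaging and making the role of the form hypothesis transparent.
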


\begin{proof}
By the second condition on $B$,
there exists $i \in [\nu]$ such that $A_i \neq O$ and 
the number of components of 1 in $A_i$ is 1 or 2.

Case 1: Suppose that the number of components of 1 in $A_i$ is 1.
There exists another $j \in [\nu] \setminus \{ i \}$ such that $A_j \neq O$.
By the second condition on $B$,
the number of components of 1 in $A_j$ has to be 1
and $A_k = O$ for all $k \in [\nu] \setminus \{ i,j \}$.
However $A_i^T R A_i = A_j^T R A_j = O$
and clearly $A_k^T R A_k = O$ for all $k \in [\nu] \setminus \{ i,j \}$.
Therefore $A_1^T R A_1 + \dots + A_{\nu}^T R A_{\nu} = O$.
This is a contradiction.

Case 2: Suppose that the number of components of 1 in $A_i$ is 2.
By the two conditions on $B$,
we have $A_i = I_2$ or $R$.
Moreover by the second condition on $B$,
it follows that $A_j = O$ for all $j \in [\nu] \setminus \{i\}$.
\end{proof}

\begin{lemm} {\it
\[ \Aute \simeq
\left\{ P(A_1,\dots, A_{\nu}) \, \Big| \, 
\text{P : permutation matrix}, A_i \in \{I_2,R\} \right\}. \]
}
\end{lemm}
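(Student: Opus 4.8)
The plan is to combine the Tang--Wan description of $\Aut(X)$ (the preceding Proposition) with the preceding Lemma, applied one block column at a time. Under the isomorphism $\Aut(X) \simeq Sp_{2\nu}(\mathbb{F}_2)$ an automorphism $g$ is identified with a symplectic matrix $A$ acting on vectors, and the condition $\mathcal{E}^g = \mathcal{E}$ says precisely that $A$ permutes the standard basis $\{e_1,\dots,e_{2\nu}\}$, i.e. that $A$ is a permutation matrix. Since symplectic matrices form a group and permutation matrices are closed under transposition and inversion, it is harmless whether the action is by $A$, $A^T$ or $A^{-1}$: in every case $\Aute$ is identified with the set of symplectic permutation matrices
\[ \mathcal{P} = \{\, A \in GL_{2\nu}(\mathbb{F}_2) \,:\, A \text{ is a permutation matrix and } A^T K A = K \,\}. \]
It therefore suffices to show that $\mathcal{P}$ equals the right-hand side, and I would prove this by two inclusions.

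For the inclusion $\mathcal{P} \subseteq \{P(A_1,\dots,A_\nu)\}$, I would write $A \in \mathcal{P}$ in $2\times 2$ block form $A = (A_{ij})_{i,j\in[\nu]}$ and use that $K = I_\nu(R,\dots,R)$ is block-diagonal, so $A^T K A = K$ reads blockwise as $\sum_{i} A_{ij}^T R A_{il} = R\,\delta_{jl}$. Fixing a block column $j$ and taking $l=j$ gives exactly the hypothesis $\sum_i A_{ij}^T R A_{ij} = R$ of the Lemma for the matrix $B$ whose $i$-th block is $A_{ij}$. Because $A$ is a permutation matrix, the two genuine columns making up block column $j$ are distinct standard basis vectors, each of weight $1$, so $B$ meets the two conditions required. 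The Lemma then yields a unique row index $\sigma(j)$ with $A_{\sigma(j),j}\in\{I_2,R\}$ and $A_{ij}=O$ for $i\ne\sigma(j)$.

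It remains to check that $\sigma$ is a permutation of $[\nu]$ and to repackage the data; injectivity of $\sigma$ is the only genuinely combinatorial point. If $\sigma(j)=\sigma(j')=i$ for some $j\ne j'$, then the four genuine columns belonging to block columns $j$ and $j'$ would all have their unique $1$ inside the two rows of block row $i$, forcing four ones into two rows and contradicting that $A$ is a permutation matrix. Hence $\sigma$ is injective, so bijective, and setting $A_i' := A_{i,\sigma^{-1}(i)} \in \{I_2,R\}$ and letting $P$ be the permutation matrix of $\sigma$ gives $A = P(A_1',\dots,A_\nu')$, as desired.

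For the reverse inclusion I would verify the defining conditions directly. Any $A = P(A_1,\dots,A_\nu)$ with $A_i\in\{I_2,R\}$ is visibly a permutation matrix, since $I_2$ and $R$ are $2\times 2$ permutation matrices and $P$ permutes the $\nu$ blocks. Writing $\pi$ for the permutation attached to $P$, so that block row $i$ carries $A_i$ in column $\pi(i)$, one computes $(A^T K A)_{jl}=\sum_i A_{ij}^T R A_{il}$; for fixed $j$ only $i=\pi^{-1}(j)$ contributes, so the sum vanishes unless $j=l$, while for $j=l$ it equals $A_{\pi^{-1}(j)}^T R A_{\pi^{-1}(j)}$, which is $R$ because $I_2^T R\, I_2 = R$ and $R^T R\, R = R^3 = R$. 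Thus $A^T K A = K$ and $A\in\mathcal{P}$. Combining the two inclusions identifies $\mathcal{P}$ with the stated set, and the displayed isomorphism is then simply the restriction to $\Aute$ of the Tang--Wan isomorphism (the right-hand side being equal to $\mathcal{P}$ is automatically a subgroup). I expect the main obstacle to be the forward direction: correctly matching each block column of a symplectic permutation matrix to the hypotheses of the Lemma, and verifying that the resulting block positions assemble into an honest permutation matrix $P$.
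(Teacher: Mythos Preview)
Your proof is correct and follows essentially the same route as the paper: reduce via the Tang--Wan isomorphism to symplectic permutation matrices, then prove the two inclusions by a direct block computation for $P(A_1,\dots,A_\nu)^T K\, P(A_1,\dots,A_\nu)=K$ and by applying the preceding Lemma to each $2\times 2$ block column for the converse. The only cosmetic difference is that the paper runs the converse iteratively (after extracting $i_1$ it clears block row $i_1$ and then moves to column~$2$, forcing $i_2\neq i_1$, etc.), whereas you apply the Lemma to all block columns at once and argue injectivity of $\sigma$ separately; the content is the same.
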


\begin{proof}
Put $\mathcal{P} =
\left\{ P(A_1, \dots, A_{\nu}) \, \Big| \, 
\text{$P$ : permutation matrix}, A_i \in \{I_2,R\} \right\}$.
By Proposition~2.2, 
\[ \Aute \simeq
\{ A \in GL_{2\nu}(\mathbb{F}_2) \, | \, A^T K A = K, A \mathcal{E} = \mathcal{E} \}, \]
so we set $Sp_{2\nu}(\mathbb{F}_2)_{\mathcal{E}}
= \{ A \in GL_{2\nu}(\mathbb{F}_2) \, | \, A^T K A = K, A \mathcal{E} = \mathcal{E} \}$
and prove that $\mathcal{P} = Sp_{2\nu}(\mathbb{F}_2)_{\mathcal{E}}$.

First, let $P(A_1,\dots,A_{\nu}) \in \mathcal{P}$.
Since $P(A_1,\dots,A_{\nu})$ is a permutation matrix of order $2\nu$,
$P(A_1,\dots,A_{\nu}) \mathcal{E} = \mathcal{E}$.
Also,
	\begin{align*}
P(A_1,\dots,A_{\nu})^T K P(A_1,\dots,A_{\nu}) 
&= P^T(A_1^T,\dots,A_{\nu}^T) K P(A_1,\dots,A_{\nu}) \\
&= P^T(A_1,\dots,A_{\nu}) K P(A_1,\dots,A_{\nu}) \\
&= P^T(A_1,\dots,A_{\nu}) I_{\nu}(R,\dots,R) P(A_1,\dots,A_{\nu}) \\
&= (P^T I_{\nu} P) (A_1 R A_1,\dots,A_{\nu} R A_{\nu}) \\
&= I_{\nu} (A_1 R A_1,\dots,A_{\nu} R A_{\nu}) \\
& = I_{\nu} (R, \dots, R) \\
& = K.
	\end{align*}
Consequently,
we see that $P(A_1,\dots,A_{\nu}) \in Sp_{2\nu}(\mathbb{F}_2)_{\mathcal{E}}$.

Conversely, let $A \in Sp_{2\nu}(\mathbb{F}_2)_{\mathcal{E}}$.
Since $A \mathcal{E} = \mathcal{E}$,
$A$ is a permutation matrix.
We set $A$ as a block matrix as follows:
\[ A = \begin{bmatrix}
A_{11}& \cdots & A_{1 \nu} \\
\vdots&\ddots&\vdots \\
A_{\nu 1}&\cdots& A_{\nu \nu}
\end{bmatrix} \qquad (A_{ij} \in M_2(\mathbb{F}_2)). \]
By $A^T K A = K$, we get
\[
\begin{bmatrix}
\sum_{i = 1}^{\nu} A_{i1}^T R A_{i1}& & & \\
&\sum_{i = 1}^{\nu} A_{i2}^T R A_{i2}&& \\
 && \ddots & \\  
 & & & \sum_{i = 1}^{\nu} A_{i\nu}^T R A_{i\nu} 
\end{bmatrix}
=
\begin{bmatrix}
R&&&\\
&R&&\\
&&\ddots&\\
&&&R
\end{bmatrix}.
\]
By comparing the (1,1) blocks,
we have $\sum_{i = 1}^{\nu} A_{i1}^T R A_{i1} = R$.
Since $A$ is a permutation matrix,
weights of the two column vectors of
\[
\begin{bmatrix}
A_{11} \\
\vdots \\
A_{\nu 1}
\end{bmatrix}
\]
are both 1 and these two column vectors are distinct.
Therefore we can apply Lemma~3.1,
that is,
there exists a unique $i_1 \in [\nu]$ such that
$A_{i_1,1} = I_2$ or $R$ and $A_{j1} = O \, (j \neq i_1)$.
Moreover, $A$ is a permutation matrix,
so we get $A_{i_1, 2} = A_{i_1, 3} = \dots = A_{i_1, \nu} = O$.
Next, we compare the (2,2) blocks.
By the similar argument above,
we see that there exists a unique $i_2 \in [\nu] \setminus \{ i_1 \}$ such that
$A_{i_2,2} = I_2$ or $R$ and
$A_{j2} = O \, (j \neq i_2)$,
so we get $A_{i_2, 3} = A_{i_2, 4} = \dots = A_{i_2, \nu} = O$.
Continuing this argument repeatedly,
we eventually have a permutation matrix $P$ of order $\nu$ and
$\nu$ matrices $B_i \in \{I_2, R\} \, (i = 1,\dots,\nu)$ such that
$A = P(B_1,\dots,B_{\nu})$.
Consequently, we see that $A \in \mathcal{P}$.
\end{proof}

Hereafter,
we often divide a vector $x \in \mathbb{F}_2^{2\nu}$
into $\nu$ blocks as follows:
\[ x = \begin{bmatrix}
x_1 \\ \vdots \\ x_{\nu}
\end{bmatrix} \qquad (x_i \in \mathbb{F}_2^2). \]
We define 
\[ O(i,j,k) = \left\{ 
x = \begin{bmatrix}
x_1 \\
\vdots \\
x_{\nu} \end{bmatrix} \in \mathbb{F}_2^{2\nu} \, \Bigg | \,
\begin{split}
\#\{ l \,|\, \wt(x_l) = 2\} = i, \\
\#\{ l \,|\, \wt(x_l) = 1\} = j, \\
\#\{ l \,|\, \wt(x_l) = 0\} = k
\end{split}
 \right\}. \]
Note that $i+j+k = \nu$.
A vector $x$ is called the {\it initial vector} of $O(i,j,k)$ if
$x_1 = \dots = x_i = [11]^T,
x_{i+1} = \dots = x_{i+j} = [10]^T,
x_{i+j+1} = \dots = x_{i+j+k} = [00]^T$.
For example,
the initial vector of $O(1,2,1)$ is $[11101000]^T$.

\begin{prop} {\it
Let $X$ be the symplectic graph $Sp(2\nu,2)$ of order $2\nu$.
The orbit partition of $\Aute$ on $V(X)$ is the following:
\[ V(X) =
\bigsqcup_{\substack{i+j+k = \nu, \\ (i,j,k) \neq (0,0,\nu) }} O(i,j,k). \]
}
\end{prop}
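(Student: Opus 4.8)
The plan is to prove two things: (a) each set $O(i,j,k)$ is invariant under $\Aute$, so that every orbit is contained in some $O(i,j,k)$; and (b) $\Aute$ acts transitively on each $O(i,j,k)$, so that each $O(i,j,k)$ is a single orbit. Since the sets $O(i,j,k)$ with $i+j+k=\nu$ and $(i,j,k)\neq(0,0,\nu)$ clearly partition $V(X)=\mathbb{F}_2^{2\nu}\setminus\{\zero\}$ (every nonzero vector has well-defined numbers of weight-$2$, weight-$1$, and weight-$0$ blocks, and only the zero vector lies in $O(0,0,\nu)$), these two facts together give the claim.

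The starting point is Lemma~3.2, which lets me work with elements $g=P(A_1,\dots,A_{\nu})$, $A_i\in\{I_2,R\}$. Writing $x=[x_1,\dots,x_{\nu}]^T$ in block form and letting $\sigma$ be the permutation of $[\nu]$ associated with $P$, a direct block computation gives $(gx)_i=A_i x_{\sigma(i)}$. The key elementary observation is that both $I_2$ and $R$ preserve the weight of a vector in $\mathbb{F}_2^2$: $I_2$ fixes each vector, and $R$ merely swaps the two coordinates, so $\wt(A_i x_{\sigma(i)})=\wt(x_{\sigma(i)})$. Hence the multiset of block-weights of $gx$ is a $\sigma$-rearrangement of that of $x$; in particular the counts $i,j,k$ of weight-$2$, weight-$1$, weight-$0$ blocks are unchanged. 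This proves invariance~(a).

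For transitivity~(b) I would show that every $x\in O(i,j,k)$ lies in the orbit of the initial vector $v$ of $O(i,j,k)$, by constructing $g\in\Aute$ with $gv=x$. Since $v$ and $x$ have the same number of blocks of each weight, I choose $\sigma$ so that $v_{\sigma(i)}$ and $x_i$ have equal weight for every $i$; such a bijection exists precisely because the type-counts agree. It then remains to match each block: where $x_i$ has weight $0$ or $2$ the block is forced ($[00]^T$ or $[11]^T$ respectively) and both $I_2$ and $R$ fix these, so I take $A_i=I_2$; where $x_i$ has weight $1$ we have $v_{\sigma(i)}=[10]^T$, and I set $A_i=I_2$ if $x_i=[10]^T$ and $A_i=R$ if $x_i=[01]^T$. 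This produces $g=P(A_1,\dots,A_{\nu})\in\Aute$ with $gv=x$, as required. Combined with~(a), which forces the orbit of $v$ to stay inside $O(i,j,k)$, this shows the orbit of $v$ is exactly $O(i,j,k)$.

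The computations here are all routine; the only point needing care is the bookkeeping in~(b), namely verifying that the weight-matching permutation $\sigma$ is well defined and that the within-block choice of $I_2$ versus $R$ reproduces $x_i$ exactly. Conceptually, the whole statement reflects the fact that the matrix group of Lemma~3.2 is isomorphic to the wreath product $\{I_2,R\}\wr S_{\nu}$ acting on $(\mathbb{F}_2^2)^{\nu}$: the base group $\{I_2,R\}$ has exactly the three weight classes $\{[00]^T\}$, $\{[10]^T,[01]^T\}$, $\{[11]^T\}$ as its orbits on $\mathbb{F}_2^2$, so the orbits of the product action are indexed by how many of the $\nu$ blocks lie in each class, which is precisely the data $(i,j,k)$.
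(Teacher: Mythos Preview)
Your proof is correct and follows essentially the same approach as the paper: both use Lemma~3.2 to realise $\Aute$ as the block-permutation group $\{P(A_1,\dots,A_\nu)\}$, argue invariance of each $O(i,j,k)$ from the fact that such matrices permute blocks and preserve within-block weights, and establish transitivity by explicitly relating an arbitrary $x\in O(i,j,k)$ to the initial vector. The only cosmetic difference is direction---the paper sends $x$ to the initial vector (first swapping weight-one blocks to $[10]^T$, then sorting), whereas you send the initial vector to $x$---and your closing wreath-product remark is a helpful conceptual gloss rather than a different argument.
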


\begin{proof}
First,
we prove that for all $x,y \in O(i,j,k)$ there exists $g \in \Aute$ such that $y = x^g$,
but we can assume that $y$ is the initial vector of $O(i,j,k)$ without loss of generality.
Let $S = \{ i \in [\nu] \, | \, x_i = [01]^T \}$
and consider the matrix $A$ of order $2\nu$ defined by the following:
\[ A = I_{\nu}(A_1,\dots,A_{\nu}), \quad \text{where }
A_i = \begin{cases} R \quad \text{if $i \in S$}, \\ I_2 \quad \text{otherwise}. \end{cases} \]
Then all weight-one blocks of $Ax$ are $[10]^T$.
After that,
we can choose an appropriate permutation matrix $P$ of order $\nu$ such that
the wights of the $\nu$ blocks of the vector $P(I_2,\dots,I_2)Ax$ are in decreasing order.
Then $P(I_2,\dots,I_2)Ax$ is nothing but the initial vector of $O(i,j,k)$,
that is, $y = P(I_2,\dots,I_2)Ax$.
By Lemma~3.2 and Proposition~2.2,
the mapping 
\[ T_{P(I_2,\dots,I_2)A} : x \mapsto P(I_2,\dots,I_2)Ax \]
is certainly an automorphism that fixes the standard basis.

Next,
we prove that for each $x \in O(i,j,k)$ and $y \in O(i',j',k')$ with $(i,j,k) \neq (i',j',k')$,
$y \neq x^g$ for all $g \in \Aute$.
By Lemma~3.2 and Proposition~2.2,
for $g \in \Aute$ there exists $P(A_1,\dots,A_{\nu}) \in \mathcal{P}$ such that
$g = T_{P(A_1,\dots,A_{\nu})}$,
where $T_{P(A_1,\dots,A_{\nu})}$ is the mapping that maps 
$z \in V(X)$ to $P(A_1,\dots,A_{\nu})z$. 
However,
roles which $P(A_1,\dots,A_{\nu})$ plays are only permuting blocks 
and exchanging the components of a block,
so if $x \in O(i,j,k)$ then $P(A_1,\dots,A_{\nu})x \in O(i,j,k)$.
Therefore, it follows that $y \neq x^g$ for all $g \in \Aute$.
\end{proof}

\subsection{Finding Godsil-McKay cells in orbit partitions}

We define $O(i,j,k)_{\even}$ and $O(i,j,k)_{\odd}$ as follows,
to decompose $O(i,j,k)$ into two more sets:
	\begin{align*}
O(i,j,k)_{\even} = \left\{
x = \begin{bmatrix} x_1 \\ \vdots \\ x_{\nu} \end{bmatrix} \in O(i,j,k)
\, \Bigg | \,
\# \left\{ l \in [\nu] \, \Big | \,
x_l = \begin{bmatrix} 1 \\ 0 \end{bmatrix} \right\} = \even \right\}, \\
O(i,j,k)_{\odd} = \left\{
x = \begin{bmatrix} x_1 \\ \vdots \\ x_{\nu} \end{bmatrix} \in O(i,j,k)
\, \Bigg | \,
\# \left\{ l \in [\nu] \, \Big | \,
x_l = \begin{bmatrix} 1 \\ 0 \end{bmatrix} \right\} = \odd \right\}.
	\end{align*}
Actually,
we can see that there exists a bijection between $O(i,j,k)_{\even}$ and $O(i,j,k)_{\odd}$.

\begin{lemm}
$|O(i,j,k)_{\even}|=|O(i,j,k)_{\odd}|$.
\end{lemm}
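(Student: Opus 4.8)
The plan is to build an explicit parity-reversing involution on $O(i,j,k)$ that swaps the even and odd cells. Recall that a vector $x \in O(i,j,k)$ is completely described by choosing which blocks have weight $2$, $1$, $0$ (the weight-$2$ and weight-$0$ blocks are forced to equal $[11]^T$ and $[00]^T$) together with a choice of $[10]^T$ or $[01]^T$ for each of the $j$ weight-one blocks. Flipping the two entries of a single weight-one block keeps $x$ inside $O(i,j,k)$ but changes $\#\{l \in [\nu] \mid x_l = [10]^T\}$ by exactly $\pm 1$, hence reverses its parity. So a single canonical flip will carry $O(i,j,k)_{\even}$ bijectively onto $O(i,j,k)_{\odd}$.

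Concretely, assuming $j \ge 1$, I would let $l_0 = l_0(x)$ be the least index $l \in [\nu]$ with $\wt(x_l) = 1$, and define $\phi(x)$ to agree with $x$ in every block except $x_{l_0}$, which is replaced by its coordinate swap $[10]^T \leftrightarrow [01]^T$. Three checks finish the argument: (a) $\phi(x) \in O(i,j,k)$, since the multiset of block weights is unchanged; (b) the set of weight-one positions is unchanged, so $l_0(\phi(x)) = l_0(x)$ and therefore $\phi$ is an involution; and (c) $\phi$ flips the parity of the number of $[10]^T$-blocks. Consequently $\phi$ restricts to a bijection $O(i,j,k)_{\even} \to O(i,j,k)_{\odd}$ (with inverse $\phi$ itself), giving $|O(i,j,k)_{\even}| = |O(i,j,k)_{\odd}|$.

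The point that needs the most care is the standing hypothesis $j \ge 1$: the involution flips a weight-one block, and none exists when $j = 0$. In that degenerate case every vector of $O(i,0,k)$ has zero $[10]^T$-blocks, so $O(i,0,k)_{\odd}$ is empty while $O(i,0,k)_{\even} = O(i,0,k)$; thus the identity is meaningful only for $j \ge 1$, which is exactly the range relevant to the Godsil-McKay cells, and I would record the lemma under that assumption. The other small subtlety is well-definedness: one must designate a \emph{canonical} block to flip (I use the smallest index) so that $\phi$ is genuinely an involution rather than merely some parity-reversing map. Equivalently, after fixing the position pattern one may view the $j$ weight-one choices as an element of $\mathbb{F}_2^j$ and the count of $[10]^T$-blocks modulo $2$ as a nonzero $\mathbb{F}_2$-linear form on $\mathbb{F}_2^j$; its two fibers have equal size $2^{j-1}$, and summing over all $\frac{\nu!}{i!\,j!\,k!}$ position patterns reproduces the claim. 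I expect the involution to be the cleaner route to write up, with the $j=0$ bookkeeping as the only real obstacle.
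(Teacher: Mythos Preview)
Your argument is essentially identical to the paper's: both build the parity-reversing involution that swaps $[10]^T \leftrightarrow [01]^T$ at the smallest-index weight-one block (the paper phrases the swap as $x_{l_{\min}} \mapsto x_{l_{\min}} + \mathbf{1}_2$). Your treatment of the $j=0$ edge case is in fact more accurate than the paper's---the paper asserts both $O(i,0,k)_{\even}$ and $O(i,0,k)_{\odd}$ are empty, whereas you correctly observe that $O(i,0,k)_{\even} = O(i,0,k)$ and the equality fails there; fortunately the lemma is only applied (in Proposition~3.5) when $j \geq 1$.
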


\begin{proof}
If $j =0$,
$O(i,j,k)_{\even}$ and $O(i,j,k)_{\odd}$ are empty sets,
so we have the above equality.
Suppose that $j \geq 1$.
For $x \in O(i,j,k)$, 
we can define $l_{\min} = \min \{ l \in [\nu] \,|\, \wt(x_l) = 1 \}$. 
Consider the following correspondence:
\[ x = \begin{bmatrix} x_1 \\ \vdots \\ x_{l_{\min}} \\ \vdots \\ x_{\nu} \end{bmatrix}
\mapsto
\begin{bmatrix} x_1 \\ \vdots \\ x_{l_{\min}} + {\bf 1}_2 \\ \vdots \\ x_{\nu}
\end{bmatrix}, \]
where ${\bf 1}_2 = [11]^T$.
By this correspondence,
parity of the number of blocks of $[10]^T$ change.
Consequently,
we get two mappings which are the one from $O(i,j,k)_{\even}$ to $O(i,j,k)_{\odd}$
and the other from $O(i,j,k)_{\odd}$ to $O(i,j,k)_{\even}$.
Clearly, these are the inverse mappings each other,
so we have the desired equality.
\end{proof}

Let $N(x)$ denote the set of all neighbors of a vertex $x$.

\begin{prop}{\it
For all $x \in O(0,\nu,0)$ and an arbitrary orbit $O(i,j,k)$
\[ |N(x) \cap O(i,j,k)| = \begin{cases}
\frac{1}{2}|O(i,j,k)| &\text{ if $j \geq 1$,} \\
|O(i,j,k)| &\text{ if $j = 0$, $i : \odd$,} \\
0 &\text{ if $j = 0$, $i : \even$.}
\end{cases}
\]
In particular,
$O(0,\nu,0)$ is a Godsil-McKay cell in the orbit partition of $\Aute$. }
\end{prop}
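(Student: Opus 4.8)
The plan is to read off the adjacency condition block by block and to exploit the fact that each block of a vertex in $O(0,\nu,0)$ contributes a nonzero linear functional on $\mathbb{F}_2^2$. Writing $x^T K y = \sum_{l=1}^{\nu} x_l^T R y_l$ and recalling that every block $x_l$ of a vertex $x \in O(0,\nu,0)$ has weight one, I would first observe that each map $\phi_l : y_l \mapsto x_l^T R y_l$ is a nonzero linear functional: one checks directly that $x_l^T R = [0\ 1]$ when $x_l = [10]^T$ and $x_l^T R = [1\ 0]$ when $x_l = [01]^T$. Hence $\phi_l$ sends $[00]^T$ to $0$, sends $[11]^T$ to $1$, and takes the value $1$ on exactly one of the two weight-one vectors $[10]^T,[01]^T$. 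Consequently, for $y \in O(i,j,k)$ the adjacency condition $\sum_l \phi_l(y_l) = 1$ decomposes as follows: the $i$ weight-two blocks contribute $i \MOD 2$, the $k$ weight-zero blocks contribute $0$, and each of the $j$ weight-one blocks contributes a bit that is toggled by swapping $[10]^T \leftrightarrow [01]^T$ in that block.

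For the case $j \geq 1$ I would reuse the fixed-point-free involution of $O(i,j,k)$ appearing in the proof of Lemma~3.4, namely the map that adds $[11]^T$ to the block $y_{l_{\min}}$ indexed by the smallest weight-one position $l_{\min} = \min\{l \mid \wt(y_l) = 1\}$. This map preserves $O(i,j,k)$ since it merely swaps $[10]^T$ and $[01]^T$ in a single weight-one block (so all block weights, and hence $l_{\min}$ itself, are unchanged), and by the computation above it changes $\phi_{l_{\min}}(y_{l_{\min}})$ by exactly $1$ regardless of $x_{l_{\min}}$. Therefore it flips the parity of $\sum_l \phi_l(y_l)$ and so interchanges the neighbors and the non-neighbors of $x$ inside $O(i,j,k)$. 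Being a fixed-point-free involution that maps $N(x) \cap O(i,j,k)$ onto its complement in $O(i,j,k)$, it forces $|N(x) \cap O(i,j,k)| = \tfrac{1}{2}|O(i,j,k)|$.

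For the case $j = 0$ there are no weight-one blocks, so the sum reduces to $\sum_{l:\,\wt(y_l)=2} \phi_l(y_l) = i \MOD 2$, a value independent of which vertex $y \in O(i,0,k)$ we picked. Thus $x$ is adjacent to every vertex of $O(i,0,k)$ when $i$ is odd and to none when $i$ is even, which gives the remaining two cases of the stated formula.

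Finally, to obtain the ``in particular'' assertion I would combine these counts with Theorem~2.1. Condition (i) holds automatically because the orbit partition of $\Aute$ is equitable, as already noted for orbit partitions in general (equitability of the full partition on $V(X)$ restricts to equitability of the remaining cells on $V(X) \setminus D$). The three computed values $0$, $\tfrac{1}{2}|O(i,j,k)|$ and $|O(i,j,k)|$ are precisely condition (ii) for the cell $D = O(0,\nu,0)$ measured against every other orbit, so $O(0,\nu,0)$ is a Godsil-McKay cell. The only real subtlety to verify carefully is the claim in the first paragraph that a weight-one block $x_l$ always produces a nonzero functional $\phi_l$ distinguishing the two weight-one vectors, since the entire parity-toggling argument for $j \geq 1$ rests on it; everything else is routine bookkeeping.
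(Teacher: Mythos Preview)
Your proof is correct and follows essentially the same approach as the paper: a block-by-block parity computation together with the involution from Lemma~3.4. The only cosmetic difference is that the paper first uses the orbit property to reduce to the single representative $x = [0101\cdots01]^T$ and then invokes Lemma~3.4 as a counting statement, whereas you keep $x$ arbitrary and apply the involution directly to swap neighbors with non-neighbors; both are the same argument.
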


\begin{proof}
For $x \in O(0,\nu,0)$ and $g \in \Aute$,
since $O(i,j,k)$ is an orbit,
\[ | N(x) \cap O(i,j,k) | = | N(x^g) \cap O(i,j,k) |, \]
so we can assume that $x = [0101 \dots 01]^T$ 
as a special vertex in $O(0,\nu,0)$ without loss of generality.
Then for $y \in N(x) \cap O(i,j,k)$,
\begin{align*}
1 &= x^T K y \\
&= [1010 \dots 10] \begin{bmatrix} y_1 \\ \vdots \\ y_{\nu} \end{bmatrix} \\
&= [10]y_1 + \dots + [10]y_{\nu}, \\
\intertext{and $[10]y_l = 1$ if and only if $y_l = [11]^T$ or $[10]^T$, so we get}
1 &\equiv \# \{l \in [\nu] \,|\, y_l = [11]^T \} + \# \{l \in [\nu] \,|\, y_l = [10]^T \}
\quad (\MOD 2) \\
&= i + \# \{l \in [\nu] \,|\, y_l = [10]^T \}.
\end{align*}
We consider two cases:

Case 1 : Suppose $j = 0$.
Then $\# \{l \in [\nu] \,|\, y_l = [10]^T \} = 0$,
so $x^T K y = 1$ if and only if $i \equiv 1 \, (\MOD 2)$ by the above observation.
Therefore,
\begin{align*}
| N(x) \cap O(i,j,k) | &= \# \{ y \in O(i,j,k) \,|\, x^T K y = 1 \} \\
&= \begin{cases}
|O(i,j,k)| &\text{if $i : \odd$}, \\
0  &\text{otherwise}.
\end{cases}
\end{align*}

Case 2 : Suppose $j \geq 1$.
Similarly,
\begin{align*}
&| N(x) \cap O(i,j,k) | \\
=& \# \{ y \in O(i,j,k) \,|\, x^T K y = 1 \} \\
=& \# \left\{ y \in O(i,j,k) \,\Big|\, i + \# \{l \in [\nu] \,|\, y_l = [10]^T \} \equiv 1 \, (\MOD 2) \right\} \\
=& \begin{cases}
\# \left\{ y \in O(i,j,k) \,\Big|\, \# \{l \in [\nu] \,|\, y_l = [10]^T \} \equiv 0 \, (\MOD 2) \right\} 
&\text{if $i : \odd$}, \\[5pt]
\# \left\{ y \in O(i,j,k) \,\Big|\, \# \{l \in [\nu] \,|\, y_l = [10]^T \} \equiv 1 \, (\MOD 2) \right\} 
&\text{otherwise}, \\
\end{cases} \\
=& \begin{cases}
|O(i,j,k)_{\even}| &\text{if $i : \odd$}, \\
|O(i,j,k)_{\odd}| &\text{otherwise}, \\
\end{cases} \\
=& \frac{1}{2}|O(i,j,k)|
\end{align*}
by Lemma~3.4.
\end{proof}

By Proposition~3.5,
we can apply Godsil-McKay switching to the symplectic graph 
with respect to the orbit partition of $\Aute$ 
with the Godsil-McKay cell $O(0,\nu,0)$.
We denote this switched graph by $X^{O(0,\nu,0)}$.
We will see that $X^{O(0,\nu,0)}$ is not isomorphic 
to the original graph $Sp(2\nu,2)$ in Section~5.

\section{Automorphisms that fix a 4-subset}

Let $X$ be a graph and let $\{C_1, V(X) \setminus C_1\}$ be a partition of $V(X)$.
If $|C_1| = 2$,
then the partition $\{C_1, V(X) \setminus C_1\}$ is always a Godsil-McKay partition 
with Godsil-McKay cell $V(X) \setminus C_1$,
but the graph switched by this partition is always isomorphic to the original one.
On the other hand, if $|C_1| \geq 4$,
the partition $\{C_1, V(X) \setminus C_1\}$ is not always a Godsil-McKay partition,
but if it is, 
then the switched graph can be non-isomorphic to the original one.
In regard to this,
Abiad, Brouwer and Haemers \cite{ABH} studied 
some sufficient conditions for being non-isomorphic after Godsil-McKay switching,
but nobody knows on necessary and sufficient conditions so far.

Even so,
Abiad and Haemers \cite{AH} studied switched symplectic graphs 
with respect to partitions of the form 
$\{C_1, V(Sp(2\nu,2)) \setminus C_1\}$ with $|C_1| = 4$
and they obtained many graphs with the same parameters as $Sp(2\nu,2)$.

In this section,
we consider the subgroup of automorphisms that fix their 4-subset $C_1$.
As a result,
we find three Godsil-McKay cells including $C_1$.

Let $S = \{v_1,v_2,v_3,v_4\}$ be a 4-subset of $V(Sp(2\nu,2))$ 
satisfying the following two conditions:
	\begin{itemize}
	\item $v_1,v_2,v_3$ are linearly independent with $v_i^T K v_j = 0$ for all $i,j \in [3]$,
	\item $v_4 = v_1+v_2+v_3$.
	\end{itemize}
Note that any three vectors $v_i,v_j,v_k \in S$ are linearly independent and 
for any $x \in V(Sp(2\nu,2))$,
$x^T K v_1 + x^T K v_2 + x^T K v_3 + x^T K v_4 = x^T K \zero = 0$,
so
$\#\{ i \in [4] \,|\, x^T K v_i = 1 \} = 0,2,4$.
Therefore we can decompose $V(Sp(2\nu,2))$ into three subsets as follows:
\[ V(Sp(2\nu,2)) = S_0 \sqcup S_2 \sqcup S_4, \]
where $S_i = \left\{ x \in V(Sp(2\nu,2)) \, \Big| \, \#\{ j \in [4] \,|\, x^T K v_j = 1 \} =i \right\}$.

\subsection{Determination of the orbit partition of $\Aut(X)_S$}

Let $X$ be the symplectic graph $Sp(2\nu,2)$ and
$S$ be the above 4-subset.
We consider 
\[ \Aut(X)_S = \{ g \in \Aut(X) \,|\, S^g = S \}. \]
Let $\SpanS$ denote the subspace spanned by $S$.
By Proposition~2.2,
we get the following:

\begin{lemm}{\it
$\SpanS^g = \SpanS$ for all $g \in \Aut(X)_S$. }
\end{lemm}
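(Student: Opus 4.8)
The plan is to reduce the lemma to the elementary fact that a linear map carries the span of a set to the span of its image; the only substantive input is that every $g \in \Aut(X)_S$ acts \emph{linearly}, which is precisely what Proposition~2.2 supplies.

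First I would use Proposition~2.2 to realize an arbitrary $g \in \Aut(X)_S$ by a matrix $A \in Sp_{2\nu}(\mathbb{F}_2)$, so that $x^g = Ax$ for every vertex $x$. Since $x \mapsto Ax$ is an $\mathbb{F}_2$-linear automorphism of $\mathbb{F}_2^{2\nu}$ fixing $\zero$, it acts on the whole subspace $\SpanS$, and by the definition of $\Aut(X)_S$ it satisfies $A(S) = S^g = S$ as sets. Linearity then gives
\[ \SpanS^g = A\langle S \rangle = \langle A(S) \rangle = \langle S \rangle = \SpanS, \]
the middle step being the standard identity $A\langle T\rangle = \langle A(T)\rangle$ for linear $A$ and any subset $T$.

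The one point deserving care --- and the reason Proposition~2.2 is invoked --- is the passage from ``$g$ is a graph automorphism fixing $S$ setwise'' to ``$g$ respects $\mathbb{F}_2$-linear combinations of the $v_i$.'' A general graph automorphism need not be linear, so without the identification $\Aut(X) \simeq Sp_{2\nu}(\mathbb{F}_2)$ one could not conclude that the images of the pairwise sums $v_i + v_j$ again lie in $\SpanS$. Once linearity is secured, however, there is no real obstacle and the lemma follows in a single line.
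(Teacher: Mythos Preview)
Your argument is correct and is precisely the reasoning the paper intends: it states Lemma~4.1 immediately after the phrase ``By Proposition~2.2, we get the following,'' leaving the linearity-of-$g$ deduction implicit. You have simply spelled out that one-line deduction, so there is nothing to add or compare.
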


Before determining the orbit partition,
we recall the useful theorem known as Witt's theorem (see for example \cite{A}).

\begin{theo}{\it
Let $V$ and $V'$ be vector spaces equipped with a non-degenerate 
symplectic inner product and 
suppose that they are isometric.
Let $\sigma$ be an isometry from an arbitrary subspace $U$ of $V$ to $V'$.
Then $\sigma$ can be extended to a surjective isometry from $V$ to $V'$.}
\end{theo}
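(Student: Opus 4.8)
The plan is to prove the statement by induction on $\dim U$, using symplectic transvections to build the extension one dimension at a time. For the base case $U = 0$, the hypothesis that $V$ and $V'$ are isometric already supplies a surjective isometry $V \to V'$, which trivially extends the empty map. For the inductive step with $\dim U = n \ge 1$, I would fix a hyperplane $H \subset U$ (so $\dim H = n-1$) and a vector $u \in U \setminus H$, giving $U = H \oplus \langle u \rangle$. The restriction $\sigma|_{H}$ is an isometry on a smaller subspace, so by the induction hypothesis it extends to a surjective isometry $\tau \colon V \to V'$. The purpose of this reduction is that $\rho := \tau^{-1} \circ \sigma \colon U \to V$ is then an isometry into $V$ itself which fixes $H$ pointwise; if I can extend $\rho$ to a surjective isometry $\rho^{*}$ of $V$ that still fixes $H$ pointwise, then $\tau \circ \rho^{*}$ is the desired surjective isometry $V \to V'$ extending $\sigma$, since it agrees with $\sigma$ on $H$ and on $u$, hence on all of $U$.

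The whole problem thus reduces to a concrete task inside one non-degenerate symplectic space $V$: given a subspace $H$ and a vector $u \notin H$, extend the map that fixes $H$ and sends $u$ to a prescribed $u' = \rho(u)$ to a surjective isometry of $V$. Writing $\langle \cdot, \cdot \rangle$ for the symplectic form and $w := u' - u$, the isometry condition together with $\rho|_{H} = \mathrm{id}$ forces $\langle w, h \rangle = 0$ for all $h \in H$, that is $w \in H^{\perp}$, equivalently $H \subseteq w^{\perp}$. I would realize the extension by symplectic transvections $T_{v,\lambda}(x) = x + \lambda \langle x, v \rangle v$, each of which fixes $v^{\perp}$ pointwise. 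Taking $v = w$ yields a map that fixes $H$ pointwise, and a direct computation gives $T_{w,\lambda}(u) = u'$ as soon as $\lambda \langle u, w \rangle = 1$; since $\langle u, w \rangle = \langle u, u' \rangle$ in a symplectic space, a \emph{single} transvection suffices precisely when $\langle u, u' \rangle \ne 0$. When $\langle u, u' \rangle = 0$ (and $u \ne u'$), I would instead route through an intermediate vector $z = u + t$ with $t \in H^{\perp}$ and compose two transvections $u \mapsto z \mapsto u'$, each being a single-transvection step of the kind just described; one checks the required directions $t$ and $w - t$ both lie in $H^{\perp}$, so $H$ stays fixed.

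The main obstacle is exactly this degenerate case: I must produce $t \in H^{\perp}$ that validates both steps, which unwinds to the two conditions $\langle u, t \rangle \ne 0$ and $\langle u, t \rangle \ne \langle t, w \rangle$. A $t$ with $\langle u, t \rangle \ne 0$ exists immediately from non-degeneracy together with $u \notin H = (H^{\perp})^{\perp}$. The only way to be blocked on the second condition is that every such $t$ also satisfies $\langle t, w \rangle = \langle u, t \rangle$; over $\mathbb{F}_{2}$ an easy argument (translating by a fixed vector on which $\langle u, \cdot \rangle = 1$) shows this would force the functionals $\langle u, \cdot \rangle$ and $\langle w, \cdot \rangle$ to agree on all of $H^{\perp}$, hence $w - u = u' \in (H^{\perp})^{\perp} = H$. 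But $\rho$ is injective with $\rho(H) = H$, so $u' = \rho(u) \notin H$, a contradiction. Therefore a valid $t$ always exists, the one-dimensional extension is realized, and the induction closes; the trivial case $u' = u$ is handled by the identity. The analogous existence step over a general field is the same in spirit, replacing the $\mathbb{F}_{2}$ counting by the observation that the two forbidden sets are a hyperplane and an affine hyperplane of $H^{\perp}$, which cannot exhaust it.
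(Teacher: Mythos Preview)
The paper does not prove this statement; it simply quotes it as Witt's theorem with a reference to Artin's \emph{Geometric Algebra} and uses it as a black box. Your proof by induction on $\dim U$, reducing to a one-dimensional extension realized by one or two symplectic transvections, is the standard textbook argument and is correct.

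Two small remarks on the final existence step. First, your second forbidden set $\{t\in H^{\perp}:\langle u,t\rangle=\langle t,w\rangle\}$ rewrites as $\{t\in H^{\perp}:\langle u',t\rangle=0\}$, which is a \emph{linear} hyperplane of $H^{\perp}$ rather than an affine one; it is proper precisely because $u'\notin H$, which you have already established. Second, since a vector space over any field cannot be the union of two proper subspaces, your ``general field'' hyperplane argument already handles $\mathbb{F}_{2}$ as well, so the separate $\mathbb{F}_{2}$ treatment, while correct, is not actually needed.
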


We can regard the value of $x^T K y$ as the value of an inner product $(x,y)$,
and preserving the value of the inner product is nothing but 
preserving the adjacency relation.
Therefore Witt's theorem guarantees that
an isometry constructed from a small subspace of $\mathbb{F}_2^{2\nu}$ 
can be extended to an automorphism of $Sp(2\nu,2)$.
This is a really strong tool to prove that 
any two vertices in a set, where we want to show it is an orbit,
can be mapped to each other by an automorphism.

Let $T = \SpanS \setminus (S \cup \{\zero\}) = \{v_1+v_2,v_2+v_3,v_3+v_1\}$. 
Note that $S,T \subset S_0$.

\begin{lemm}{\it $\Aut(X)_S$ acts on $S$ as ${\rm Sym}(S)$,
where ${\rm Sym}(S)$ is the symmetric group on $S$.}
\end{lemm}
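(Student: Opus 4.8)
The plan is to show that the natural action homomorphism $\Phi \colon \Aut(X)_S \to {\rm Sym}(S)$ is surjective. This homomorphism is well defined because every $g \in \Aut(X)_S$ satisfies $S^g = S$ by definition, so $g$ induces a permutation of the four vertices of $S$. Concretely, I would fix an arbitrary $\pi \in {\rm Sym}(S)$ and manufacture an automorphism $g \in \Aut(X)_S$ whose restriction to $S$ realizes $\pi$; surjectivity of $\Phi$ then yields the claim that $\Aut(X)_S$ acts as the full symmetric group.

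The key structural observation I would exploit is that $\SpanS$ is totally isotropic. Since $v_i^T K v_j = 0$ for all $i,j \in [3]$ and the symplectic form is bilinear and alternating over $\mathbb{F}_2$ (so $x^T K x = 0$ automatically), the value $x^T K y$ vanishes for all $x,y \in \SpanS$. Consequently \emph{any} linear automorphism of $\SpanS$, viewed as a map into $\mathbb{F}_2^{2\nu}$, is automatically an isometry for the induced form. This is what makes Witt's theorem directly applicable and removes the usual burden of checking that a prescribed map preserves inner products.

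Next, given $\pi$, I would define $\sigma \colon \SpanS \to \SpanS$ on the basis $v_1,v_2,v_3$ by $\sigma(v_i) = v_{\pi(i)}$. The crucial consistency check is that $\sigma$ then also sends $v_4$ to $v_{\pi(4)}$: from $v_1 + v_2 + v_3 + v_4 = \zero$, reordering the summands gives $v_{\pi(1)} + v_{\pi(2)} + v_{\pi(3)} + v_{\pi(4)} = \zero$, hence $\sigma(v_4) = \sigma(v_1+v_2+v_3) = v_{\pi(1)} + v_{\pi(2)} + v_{\pi(3)} = v_{\pi(4)}$. Because any three vectors of $S$ are linearly independent, $\sigma$ carries the basis $v_1,v_2,v_3$ to another basis $v_{\pi(1)},v_{\pi(2)},v_{\pi(3)}$ of $\SpanS$, so $\sigma$ is a linear automorphism of $\SpanS$ inducing $\pi$ on $S$, and by the previous paragraph it is an isometry from $\SpanS$ into $\mathbb{F}_2^{2\nu}$. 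Finally I would invoke Witt's theorem (Theorem~4.2) to extend $\sigma$ to a surjective isometry $g$ of $\mathbb{F}_2^{2\nu}$, which by Proposition~2.2 lies in $\Aut(X)$; since $g$ restricts to $\sigma$ on $\SpanS$ and $\sigma$ permutes $S$ according to $\pi$, we have $S^g = S$, so $g \in \Aut(X)_S$ and $\Phi(g) = \pi$.

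I expect the only genuinely delicate point to be the consistency check on $v_4$, namely that the relation $v_4 = v_1 + v_2 + v_3$ is symmetric enough under relabeling that \emph{every} permutation of $S$ respects it and therefore extends linearly; combined with total isotropy forcing each such $\sigma$ to be an isometry, Witt's theorem then does the remaining work essentially for free, and $\pi$ being arbitrary gives surjectivity of $\Phi$.
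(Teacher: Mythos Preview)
Your proof is correct and follows essentially the same approach as the paper: fix an arbitrary permutation of $S$, define the corresponding linear map on $\SpanS = \langle v_1,v_2,v_3\rangle$, verify it is an isometry and sends $v_4$ to the right place, and extend via Witt's theorem. Your version is simply more explicit about why the map is an isometry (total isotropy of $\SpanS$) and about the consistency check on $v_4$, both of which the paper leaves implicit.
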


\begin{proof} 
Let $[4] = \{i_1,i_2,i_3,i_4\}$.
We consider the subspace $U = \langle v_1,v_2,v_3 \rangle$ and
the linear mapping $g$ from $U$ to $\mathbb{F}_2^{2\nu}$ such that 
$v_{j}^g = v_{i_j}$ for $j \in [3]$.
We see that $g$ is an isometry,
so there exists an automorphism $g^*$ of $Sp(2\nu,2)$ 
such that $g^* |_{U} = g$ by Witt's theorem.
Also, 
$v_{4}^g = v_{i_4}$ since $v_{i_4} = v_{i_1} + v_{i_2} + v_{i_3}$,
so we see that $\Aut(X)_S$ acts on $S$ as ${\rm Sym}(S)$. 
\end{proof}

\begin{prop} {\it
Let $X$ be the symplectic graph $Sp(2\nu,2)$.
The orbit partition of $\Aut(X)_S$ on $V(X)$ is 
$\{ S, T, S_0 \setminus (S \cup T), S_2, S_4 \}$. }
\end{prop}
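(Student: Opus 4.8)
The plan is to show that the five announced sets $\{S, T, S_0\setminus(S\cup T), S_2, S_4\}$ are exactly the orbits of $\Aut(X)_S$ on $V(X)$. This requires two things: first, that each of the five sets is $\Aut(X)_S$-invariant (so every orbit lies inside one of them); and second, that $\Aut(X)_S$ acts transitively on each set (so each set is a single orbit). The first part is mostly structural and follows from what we already have; the second part is where Witt's theorem does the heavy lifting.

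First I would verify invariance. The sets $S_0, S_2, S_4$ are defined by the value $\#\{j\in[4] \mid x^T K v_j = 1\}$, and since any $g\in\Aut(X)_S$ permutes $\{v_1,v_2,v_3,v_4\}$ (by Lemma~4.3) and preserves the inner product $x^T K y$, it preserves this count; hence each $S_i$ is invariant. Next, $S$ is invariant by definition of $\Aut(X)_S$. For $T$, I would use Lemma~4.1: since $\SpanS^g=\SpanS$ and $g$ fixes $S$ setwise and fixes $\zero$, it must permute the remaining nonzero vectors $T=\SpanS\setminus(S\cup\{\zero\})$ among themselves, so $T$ is invariant. Consequently $S_0\setminus(S\cup T)$ is invariant as the complement within the invariant set $S_0$ of the two invariant sets $S$ and $T$.

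Next I would prove transitivity on each of the five sets, which is the substantive step. Transitivity on $S$ is immediate from Lemma~4.3 (indeed $\Aut(X)_S$ acts as the full symmetric group $\mathrm{Sym}(S)$). For $T$, one can note that the $\mathrm{Sym}(S)$ action on $\{v_1,v_2,v_3,v_4\}$ induces a transitive action on the three sums $\{v_1+v_2, v_2+v_3, v_3+v_1\}$, giving transitivity on $T$. The heart of the matter is transitivity on $S_2$, on $S_4$, and on $S_0\setminus(S\cup T)$. For these I would take two vertices $x,y$ in the same set and build an isometry between suitable subspaces that sends the configuration of $x$ relative to $S$ to the configuration of $y$ relative to $S$, then invoke Witt's theorem to extend it to an automorphism of $Sp(2\nu,2)$ that fixes $S$ setwise. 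Concretely, given $x$, the data $(x^T K v_1, \dots, x^T K v_4)$ together with the linear relations among $x, v_1, v_2, v_3$ determine the isometry type of $\langle v_1, v_2, v_3, x\rangle$; for $x,y$ in the same $S_i$ these types agree up to a permutation of the $v_j$, and matching them defines an isometry on the relevant subspace fixing $\SpanS$.

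The main obstacle will be transitivity on $S_0\setminus(S\cup T)$ and, to a lesser degree, on $S_2$. The difficulty is twofold: I must check that the prescribed correspondence genuinely \emph{is} an isometry (that it preserves all inner products $x^T K v_j$ and the self-value $x^T K x = 0$, so that it extends to a subspace map respecting the symplectic form), and I must confirm that the extended automorphism actually fixes $S$ as a set rather than merely fixing $\SpanS$. The subtlety is that a vertex $x\in S_0$ can be linearly dependent on or independent of $v_1,v_2,v_3$ in several ways, and these subcases (for instance, whether $x\in\SpanS$, which is precisely what distinguishes $T$ from $S_0\setminus(S\cup T)$) must be handled so that the extension can be arranged to permute $S$. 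I would organize this by first fixing a permutation $\pi$ matching the inner-product pattern of $y$ to that of $x$, replacing $x$ by $x^{g}$ for a suitable $g$ realizing $\pi$ on $S$ (available by Lemma~4.3), thereby reducing to the case where $x$ and $y$ have \emph{identical} inner-product vectors against $(v_1,v_2,v_3,v_4)$; then the map fixing each $v_j$ and sending $x\mapsto y$ is an isometry on $\langle v_1,v_2,v_3,x\rangle$, and Witt's theorem yields an automorphism fixing $\SpanS$ pointwise (hence $S$ setwise) and carrying $x$ to $y$.
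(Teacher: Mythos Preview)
Your proposal is correct and follows essentially the same route as the paper: invariance of the five sets via the adjacency-count and Lemma~4.1, transitivity on $S$ and $T$ via Lemma~4.3, and transitivity on the remaining three sets by applying Witt's theorem to the subspace $\langle v_1,v_2,v_3,x\rangle$ with the map fixing the $v_i$ and sending $x\mapsto y$. Your extra step of first permuting $S$ to align the inner-product patterns is precisely what the paper's ``similar argument'' for $P=S_2$ amounts to (and is redundant for $S_4$ and $S_0\setminus(S\cup T)$, where the patterns are already constant).
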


\begin{proof}
First,
we prove that any two vertices in different sets cannot be mapped to each other.
For any $g \in \Aut(X)_S$ and for any $x \in V(X)$,
we can define a mapping from 
$\{ v_i \in S \,|\, x^T K v_i = 1 \}$ to $\{ v_i \in S \,|\, (x^g)^T K v_i = 1 \}$ 
that maps $v_i$ to $v_i^g$ and it is clearly bijective,
so the value of $\#\{ v_i \in S \,|\, x^T K v_i = 1 \}$ is invariant under $g \in \Aut(X)_S$.
Therefore $S_0,S_2,S_4$ cannot be mapped to each other.
By Lemma~4.1,
$\SpanS \setminus \{ \zero \}$ and $S_0 \setminus \SpanS$ 
cannot be mapped to each other.
Since $S^g = S$ for any $g \in \Aut(X)_S$,
$S$ and $T$ cannot be mapped to each other.
Consequently,
we see that $S,T,S_0 \setminus (S \cup T),S_2,S_4 $ cannot be mapped to each other.

Next,
we prove that for every $P \in \{ S,T,S_0 \setminus (S \cup T),S_2,S_4 \}$,
any two vertices in $P$ can be mapped to each other by some $g \in \Aut(X)_S$.
It is clear in the case $P \in \{S,T\}$ by Lemma~4.3.
Thus, we consider $P \in \{ S_0 \setminus (S \cup T), S_2, S_4 \}$.
Note that for three distinct vertices $v_i,v_j,v_k \in S$ 
and $x \in V(X) \setminus \SpanS$,
$x,v_i,v_j,v_k$ are linearly independent.
Assume that $P \in \{S_0 \setminus (S \cup T), S_4\}$.
Let $x,y \in P$ and we consider the subspace $U = \langle x,v_1,v_2,v_3 \rangle$ and 
the linear mapping $g$ from $U$ to $\mathbb{F}_2^{2\nu}$ such that $x^g = y$ 
and $v_i^g = v_i$ for $i \in [3]$.
Then $g$ preserves the value of the inner product 
and $g$ is injective since $x,v_1,v_2,v_3$ are linearly independent,
so $g$ is an isometry.
Therefore by Witt's theorem,
there exists an automorphism $g^*$ of $X$ such that $g^* |_U = g$.
This fixes $S$ and maps $x$ to $y$.
The case $P = S_2$ is proved by similar argument.
\end{proof}

\subsection{Finding Godsil-McKay cells}

Let $x \in V(Sp(2\nu,2))$.
Since $v_4 = v_1+v_2+v_3$,
$\#\{i \in [4] \, | \, v_i ^T K x = 1\} =4$ if and only if 
$v_1 ^T K x = v_2 ^T K x = v_3 ^T K x = 1$.
Let 
\[ M = \begin{bmatrix} v_1^T K \\ v_2^T K \\ v_3^T K \end{bmatrix}. \]
Then $S_4 = \{ x \in V(Sp(2\nu,2)) \, | \, Mx = {\bm 1}_3 \}$.
Since $v_1,v_2,v_3$ are linearly independent,
$\rank M = 3$,
so the system of equations $Mx = {\bm 1}_3$ has a solution.
Thus,
we have a bijection from $S_4$ to $\Ker T_M$,
so $|S_4| = 2^{2\nu-3}$.
A similar argument gives us $|S_0| = 2^{2\nu-3}-1$.
Also,
$S_2$ is the complement of $S_0 \cup S_4$,
so we obtain $|S_2| = (2^{2\nu}-1) - (2^{2\nu-3}-1) - 2^{2\nu-3} = 3 \cdot 2^{2\nu-2}$.
Summarizing above,
we get the following:

\begin{lemm}{\it
\[ |S_i| = \begin{cases} 
2^{2\nu-3}-1 &\text{if $i=0$,} \\
3 \cdot 2^{2\nu-2} &\text{if $i=2$,} \\
2^{2\nu-3} &\text{if $i=4$.}
\end{cases} \]}
\end{lemm}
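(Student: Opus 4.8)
The plan is to realize $S_4$ and $S_0$ as solution sets of linear systems over $\mathbb{F}_2$, count them via the rank of a single matrix, and then obtain $|S_2|$ by complementation. Throughout I will exploit the relation $v_4 = v_1+v_2+v_3$ to cut the four defining conditions down to three.

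First I would observe that $x \in S_4$, which asks all four inner products to equal $1$, is equivalent to just $x^T K v_1 = x^T K v_2 = x^T K v_3 = 1$: the fourth value is then forced, since $x^T K v_4 = x^T K v_1 + x^T K v_2 + x^T K v_3 = 1+1+1 = 1$ over $\mathbb{F}_2$. Collecting these three functionals into the $3 \times 2\nu$ matrix $M$ whose rows are $v_1^T K, v_2^T K, v_3^T K$, this reads $S_4 = \{ x \in \mathbb{F}_2^{2\nu} \mid Mx = \bm{1}_3 \}$. Note that $\zero$ is automatically excluded because $M\zero = \zero \neq \bm{1}_3$, so the deletion of the zero vector from the vertex set requires no correction for $S_4$.

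The crucial input is $\rank M = 3$. Since $K$ is invertible, the assignment $v \mapsto v^T K$ is a linear isomorphism on row vectors, so the linear independence of $v_1, v_2, v_3$ yields linear independence of $v_1^T K, v_2^T K, v_3^T K$; hence $M$ has full row rank $3$ and is surjective as a map $\mathbb{F}_2^{2\nu} \to \mathbb{F}_2^3$. This guarantees that $Mx = \bm{1}_3$ is consistent, and its solution set is a coset of $\Ker M$, a subspace of dimension $2\nu - 3$; therefore $|S_4| = 2^{2\nu-3}$. The same matrix handles $S_0$: arguing as above, the condition that all four inner products vanish reduces to $Mx = \zero$, so $S_0 = \Ker M \setminus \{\zero\}$ and $|S_0| = 2^{2\nu-3} - 1$, the $-1$ now genuinely accounting for the excluded zero vector.

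Finally, using the already established disjoint decomposition $V(Sp(2\nu,2)) = S_0 \sqcup S_2 \sqcup S_4$ (from $\#\{ i \in [4] \mid x^T K v_i = 1 \} \in \{0,2,4\}$), I would compute $|S_2|$ by subtraction, obtaining $|S_2| = (2^{2\nu}-1) - (2^{2\nu-3}-1) - 2^{2\nu-3} = 2^{2\nu} - 2^{2\nu-2} = 3 \cdot 2^{2\nu-2}$. The argument is essentially routine linear algebra; the only step requiring care is the reduction of the four-fold conditions defining $S_4$ and $S_0$ to the single rank-$3$ system $Mx = \bm{1}_3$ (respectively $Mx = \zero$), together with the consistency of the inhomogeneous system, which is exactly what $\rank M = 3$ secures and what ensures $S_4$ is a genuine full coset rather than possibly empty.
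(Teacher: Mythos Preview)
Your proof is correct and follows essentially the same approach as the paper: both reduce the four conditions to the rank-$3$ system $Mx = \bm{1}_3$ (resp.\ $Mx = \zero$) via $v_4 = v_1+v_2+v_3$, count solutions from $\rank M = 3$, and obtain $|S_2|$ by complementation. Your write-up is slightly more explicit about why $\rank M = 3$ (invertibility of $K$) and why the zero vector needs no correction for $S_4$, but the argument is the same.
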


We decompose $S_2$ more.
For distinct indices $i,j$,
define 
\[ S_2(i,j) = \{ x \in S_2 \,|\, x^T K v_i = x^T K v_j = 1 \}. \]
By Lemma~4.3,
we see that there is a bijection from $S_2(1,2)$ to $S_2(i,j)$,
so
	\begin{equation}\label{4S2}
|S_2| = \sum_{i,j} |S_2(i,j)| = 6|S_2(1,2)|.
	\end{equation}

Let $X$ be a graph and 
let $\{O_1,\dots,O_t\}$ be an orbit partition of a group of automorphisms of $X$.
Then for all $x \in O_i$,
$|N(x) \cap O_j|$ is a constant value.
By counting the cardinality of 
$\{ xy \in E(X) \,|\, x \in O_i, y \in O_j \}$ in two ways,
we obtain the following useful formula:

\begin{lemm}{\it 
For any $x \in O_i$ and $y \in O_j$,
\[ |O_i| |N(x) \cap O_j| = |O_j| |N(y) \cap O_i|.\]
}
\end{lemm}

\begin{prop}{\it 
Let $P \in \{ S,T,S_0 \setminus (S \cup T), S_2,S_4 \}$.
Then the following statements hold:
	\begin{enumerate}[(i)]
	\item {\it For any $x \in S$,
\[ |N(x) \cap P| =
\begin{cases}
0 & \text{ if $P = T$ or $S_0 \setminus (S \cup T)$, } \\
|P| & \text{ if $P = S_4$, } \\
\frac12 |P| & \text{ if $P = S_2$. }
\end{cases} \] }
	\item {\it For any $x \in S_4$,
\[ |N(x) \cap P| =
\begin{cases}
0 & \text{ if $P = T$, } \\
|P| & \text{ if $P = S$, } \\
\frac12 |P| & \text{ if $P = S_2$ or $S_0 \setminus (S \cup T)$. }
\end{cases} \] }
	\item {\it For any $x \in S_0 \setminus (S \cup T)$,
\[ |N(x) \cap P| =
\begin{cases}
0 & \text{ if $P = S$ or $T$, } \\
\frac12 |P| & \text{ if $P = S_2$ or $S_4$. }
\end{cases} \] }
	\end{enumerate}
In particular,
$S, S_0 \setminus (S \cup T), S_4$ are Godsil-McKay cells.  }
\end{prop}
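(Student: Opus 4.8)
The plan is to evaluate each entry $|N(x)\cap P|$ after reducing to a convenient representative of the orbit of $x$; this reduction is legitimate because, by Proposition~4.4, both the orbit of $x$ and $P$ are cells of the orbit partition of $\Aut(X)_S$, so $|N(x)\cap P|$ depends only on the orbit of $x$. The entries equal to $0$ or $|P|$ I would dispatch directly from the defining conditions, using that $x^TKy$ is symmetric in $x,y$ and that $v_4=v_1+v_2+v_3$. For example, taking $x=v_1\in S$, the computation $v_1^TK(v_i+v_j)=0$ gives $N(x)\cap T=\emptyset$; every $y\in S_0$ satisfies $v_1^TKy=0$, giving $N(x)\cap(S_0\setminus(S\cup T))=\emptyset$; and every $y\in S_4$ satisfies $v_1^TKy=1$, giving $N(x)\cap S_4=S_4$. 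The corresponding $0$ and $|P|$ entries for $x\in S_4$ and for $x\in S_0\setminus(S\cup T)$ follow the same way.

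For the entries equal to $\tfrac12|P|$ I would argue by fiber-counting. Writing $M=[v_1^TK;\,v_2^TK;\,v_3^TK]$ as in the paper, the sets $S_0$, $S_2$, $S_4$ are the preimages under $y\mapsto My$ of $\{\zero\}$ (with $\zero$ removed), of the six nonzero vectors other than $\mathbf 1_3$, and of $\mathbf 1_3$, respectively. When $x\notin\SpanS$ — which holds for $x\in S_4$ and for $x\in S_0\setminus(S\cup T)$, since $\SpanS\setminus\{\zero\}=S\cup T$ — the augmented matrix
\[ \Psi=\begin{bmatrix} x^TK \\ M \end{bmatrix} \]
has rank $4$, so each fiber of $y\mapsto\Psi y$ has size $2^{2\nu-4}$. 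Counting the target patterns $(1,*,*,*)$ whose last three entries place $y$ in $P$ then gives the neighbor counts at once: six such patterns when $P=S_2$ and a single pattern $(1,1,1,1)$ when $P=S_4$, yielding $6\cdot2^{2\nu-4}=\tfrac12|S_2|$ and $2^{2\nu-4}=\tfrac12|S_4|$. For $x\in S$ the matrix $\Psi$ drops rank (as $x\in\SpanS$), so there I would instead count with $M$ alone, or equivalently write $\{y\in S_2:v_1^TKy=1\}=S_2(1,2)\sqcup S_2(1,3)\sqcup S_2(1,4)$ and invoke \eqref{4S2} to get $\tfrac12|S_2|$.

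The one delicate computation, which I expect to be the main obstacle, is $|N(x)\cap(S_0\setminus(S\cup T))|$ for $x\in S_4$. Here the relevant fiber is $\Psi^{-1}(1,0,0,0)$, of size $2^{2\nu-4}$, but it contains the four vectors of $S\subset\SpanS$ (each has $My=\zero$ and $x^TKv_i=1$), and these must be discarded because $S_0\setminus(S\cup T)$ excludes $\SpanS$; one checks that no other vector of $\SpanS$ lies in this fiber. Hence the count is $2^{2\nu-4}-4$, which equals $\tfrac12|S_0\setminus(S\cup T)|$ once we use $|S_0|=2^{2\nu-3}-1$ from Lemma~4.5 and $|S\cup T|=7$. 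As an independent check, Lemma~4.6 applied to the ordered pair $(S_4,\,S_0\setminus(S\cup T))$ forces the two half-entries relating these cells to appear together, matching the value $\tfrac12|S_4|$ obtained for $x\in S_0\setminus(S\cup T)$.

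Finally, to conclude that $S$, $S_0\setminus(S\cup T)$ and $S_4$ are Godsil-McKay cells I would check the hypotheses of Theorem~2.1 with $D$ taken to be each of these cells in turn. Condition~(i) is automatic, since the partition identified in Proposition~4.4 is an orbit partition and therefore equitable, so it restricts to an equitable partition of $V(X)\setminus D$; condition~(ii) is exactly the assertion that every entry $|N(x)\cap P|$ computed in (i)--(iii) lies in $\{0,\tfrac12|P|,|P|\}$, which the case analysis above establishes. Therefore Godsil-McKay switching applies with any of these three cells as the special cell.
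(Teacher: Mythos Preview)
Your proposal is correct and follows essentially the same approach as the paper: reduction to a representative via the orbit partition, direct verification of the $0$ and $|P|$ entries, the decomposition $N(v_1)\cap S_2=\bigsqcup_j S_2(1,j)$ together with \eqref{4S2} for the $S_2$ count from $S$, and rank-$4$ fiber counting with the augmented matrix $\Psi$ for the remaining half-entries, including the subtraction of $|S|=4$ in the $S_0\setminus(S\cup T)$ case. The only cosmetic difference is that you compute $|N(x)\cap S_4|$ for $x\in S_0\setminus(S\cup T)$ directly by fiber counting and invoke Lemma~4.6 as a cross-check, whereas the paper derives that value from Lemma~4.6 and part~(ii).
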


\begin{proof}
First,
we prove that $S$ is a Godsil-McKay cell,
but since $\{S,T,S_0 \setminus (S \cup T),S_2,S_4\}$ is the orbit partition,
it is sufficient to prove only that for all $P \in \{T,S_0 \setminus (S \cup T),S_2,S_4\}$ 
and a special vertex $x_0 \in S$,
$|N(x_0) \cap P| = 0,\frac{1}{2}|P|$ or $|P|$.
We take $v_1 \in S$ as a special vertex.
It is easy to see that $N(v_1) \cap S_4 = S_4$ and $N(v_1) \cap S_0 = \emptyset$,
so we get $|N(v_1) \cap S_4| = |S_4|$ and 
$|N(v_1) \cap T| = |N(v_1) \cap (S_0 \setminus (S \cup T)) | = 0$.
If $i < j$, then
\[ N(v_1) \cap S_2(i,j) =
\begin{cases}
S_2(i,j) & \quad \text{if $i = 1$,} \\
\emptyset & \quad \text{otherwise.}
\end{cases} \]
Consequently, we have 
\[ N(v_1) \cap S_2 = \bigsqcup_{i,j}(N(v_1) \cap S_2(i,j)) 
= \bigsqcup_{j=2}^{4} S_2(1,j), \]
so $|N(v_1) \cap S_2| = 3 |S_2(1,2)| = \frac{1}{2}|S_2|$ by the equality (\ref{4S2}).

Next,
we prove that $S_4$ is a Godsil-McKay cell.
Let $x \in S_4$ be a special vertex.
It is easy to see that $|N(x) \cap S| = |S|$ and $|N(x) \cap T| = 0$.
To find the value of $|N(x) \cap (S_0 \setminus (S \cup T))|$,
we calculate $|N(x) \cap S_0|$ first.
Observe $N(x) \cap S_0 = 
\left\{ y \in V(Sp(2\nu,2)) \, \Big| \, 
x^T K y = 1, \#\{i \in [4] \,|\, v_i^T K y = 1\} = 0 \right\}$,
but $\#\{i \in [4] \,|\, v_i^T K y = 1\} = 0$ if and only if 
$v_1^T K y = v_2^T K y = v_3^T K y = 0$,
so $N(x) \cap S_0 = \{ y \in \mathbb{F}_2^{2\nu} \,|\, My = [1000]^T \}$,
where
\[ M = \begin{bmatrix} x^T K \\ v_1^T K \\ v_2^T K \\ v_3^T K \end{bmatrix}. \]
Since $x,v_1,v_2,v_3$ are linearly independent,
there exists a bijection from $N(x) \cap S_0$ to $\Ker T_M$.
Therefore we get $|N(x) \cap S_0| = 2^{2\nu-4}$.
Consequently,
	\begin{align*}
|N(x) \cap (S_0 \setminus (S \cup T))| 
&= |N(x) \cap S_0| - |N(x) \cap S_0 \cap (S \cup T)| \\
&= 2^{2\nu-4} - |N(x) \cap S_0 \cap S| - |N(x) \cap S_0 \cap T| \\
&= 2^{2\nu-4} - |N(x) \cap S| \\
&= 2^{2\nu-4} - 4.
	\end{align*}
On the other hand,
$|S_0 \setminus (S \cup T)| = 2^{2\nu-3} - 8$ by Lemma~4.5,
so we obtain $|N(x) \cap (S_0 \setminus (S \cup T))| = 
\frac{1}{2}| S_0 \setminus (S \cup T) |$.
We can determine the value of $|N(x) \cap S_2|$ similarly as above.
Observe 
\[ N(x) \cap S_2 = \left\{ y \in V(Sp(2\nu,2)) \, \Big| \, 
x^T K y = 1, \#\{ i \in [4] \,|\, v_i^T K y = 1 \} = 2 \right\}, \]
but $\#\{ i \in [4] \,|\, v_i^T K y = 1 \} = 2$ if and only if 
$\#\{ i \in [3] \,|\, v_i^T K y = 1 \} = 1$ or $2$.
Therefore
\[ N(x) \cap S_2 = \bigsqcup_{ \substack{ {\bm b} \in \mathbb{F}_2^{3}, \\
\wt({\bm b}) \in \{1,2\} }}
\left\{ y \in \mathbb{F}_2^{2\nu} \, \Bigg| \, 
\begin{bmatrix} x^T K \\ v_1^T K \\ v_2^T K \\ v_3^T K \end{bmatrix} y = 
\begin{bmatrix} 1 \\ {\bm b} \end{bmatrix} \right\} \]
and we get $\#\{ y \in \mathbb{F}_2^{2\nu} \,|\, My = [1{\bm b}^T]^T \} = 2^{2\nu-4}$
for a fixed ${\bm b}$.
Consequently,
$|N(x) \cap S_2| = 6 \cdot 2^{2\nu-4} = \frac{1}{2}|S_2|$,
so we can see that $S_4$ is a Godsil-McKay cell.

Finally,
we prove that $S_0 \setminus (S \cup T)$ is a Godsi-McKay cell.
Let $x \in S_0 \setminus (S \cup T)$ be a special vertex.
It is easy to see that $|N(x) \cap S| = |N(x) \cap T| = 0$.
Also, 
	\begin{align*}
& |N(x) \cap S_2| \\
=& \# \left\{ y \in V(Sp(2\nu,2)) \, \Big| \, 
x^T K y = 1, \#\{ i \in [4] \,|\, v_i^T K y = 1 \} = 2 \right\} \\
=& \sum_{ \substack{ {\bm b} \in \mathbb{F}_2^{3}, \\
\wt({\bm b}) \in \{1,2\} }}
\# \left\{ y \in \mathbb{F}_2^{2\nu} \, \Bigg| \, 
\begin{bmatrix} x^T K \\ v_1^T K \\ v_2^T K \\ v_3^T K \end{bmatrix} y = 
\begin{bmatrix} 1 \\ {\bm b} \end{bmatrix} \right\} \\
=& 6 \cdot 2^{2\nu-4} \\
=& \frac12 |S_2|.
	\end{align*}
Furthermore,
for $y \in S_4$,
	\begin{align*}
& |N(x) \cap S_4| \\
= & \frac{1}{|S_0 \setminus (S \cup T)|} |S_4| |N(y) \cap S_0 \setminus (S \cup T)|
 \tag{by Lemma~4.6} \\
= & \frac{1}{|S_0 \setminus (S \cup T)|} |S_4| \cdot \frac12|S_0 \setminus (S \cup T)| 
 \tag{by part (ii)} \\
= & \frac12 |S_4|.
	\end{align*}
Hence $S_0 \setminus (S \cup T)$ is a Godsil-McKay cell.
\end{proof}

Therefore on the orbit partition of $\Aut(X)_S$ on $V(X)$ ,
we obtain three switched symplectic graphs 
with Godsil-McKay cells $S,S_0 \setminus (S \cup T)$ and $S_4$.
Let $X^S$, $X^{S_0 \setminus (S \cup T)}$ and $X^{S_4}$
denote their switched graphs, respectively.
In general,
the set of edges deleted by Godsil-McKay switching with respect to a partition 
$\{C_1,\dots.C_t,D\}$ is
\[ \bigsqcup_{i=1}^{t} \bigsqcup_{x \in D}
\left\{ \{x,y\} \, \Bigg| \, y \in C_i, x \sim y, |N(x) \cap C_i| = \frac{1}{2}|C_i| \right\} \]
and the set of added edges is similarly
\[ \bigsqcup_{i=1}^{t} \bigsqcup_{x \in D}
\left\{ \{x,y\} \, \Bigg| \, y \in C_i, x \not\sim y, |N(x) \cap C_i| = \frac{1}{2}|C_i| \right\}. \]
Abiad and Haemers \cite{AH} proved that the partition $\{S,V(X) \setminus S\}$ 
is a Godsil-McKay partition with Godsil-McKay cell $D = V(X) \setminus S$, and 
constructed the switched symplectic graph that is not isomorphic to the original one.
The set of deleted edges to construct this switched symplectic graph 
by Abiad and Haemers is
\[ \bigsqcup_{x \in V(X) \setminus S}
\left\{ \{x,y\} \, \Bigg| \, y \in S, x \sim y, |N(x) \cap S| = \frac{1}{2}|S| \right\}, \]
but it is easy to see that $|N(x) \cap S| = \frac{1}{2}|S|$
if and only if $x \in S_2$. 
Therefore this is equal to
	\begin{equation}\label{4a}
\bigsqcup_{x \in S_2} \left\{ \{x,y\} \, \Big| \, y \in S, x \sim y \right\}.
	\end{equation}
On the other hand,
the set of deleted edges to construct $X^S$ is
\[ \bigsqcup_{P \in \{T,S_0 \setminus (S \cup T), S_2, S_4 \}} 
\bigsqcup_{x \in S} 
\left\{ \{x,y\} \, \Bigg| \, y \in P, x \sim y, |N(x) \cap P| = \frac{1}{2}|P| \right\}, \]
but we have already confirmed that for $x \in S$,
$|N(x) \cap P| = \frac{1}{2}|P|$ if and only if $P = S_2$ by Proposition~4.7-(i).
Therefore this is equal to (\ref{4a}) 
which is nothing but the one by Abiad and Haemers.
Similarly,
on the set of added edges,
	\begin{align*}
& \bigsqcup_{x \in V(X) \setminus S}
\left\{ \{x,y\} \, \Bigg| \, y \in S, x \not\sim y, |N(x) \cap S| = \frac{1}{2}|S| \right\} \\
=& \bigsqcup_{x \in S_2} \left\{ \{x,y\} \, \Big| \, y \in S, x \not\sim y \right\} \\
=& \bigsqcup_{P \in \{T,S_0 \setminus (S \cup T), S_2, S_4 \}} 
\bigsqcup_{x \in S} 
\left\{ \{x,y\} \, \Bigg| \, y \in P, x \not\sim y, |N(x) \cap P| = \frac{1}{2}|P| \right\},
	\end{align*}
so we can see the following:

\begin{cor} {\it
$X^S$ is isomorphic to the switched symplectic graph with respect to 
the Godsil-McKay partition $\{S, V(X) \setminus S\}$ with 
Godsil-McKay cell $V(X) \setminus S$. }
\end{cor}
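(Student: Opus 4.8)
The plan is to show that the two switched graphs are not merely isomorphic but in fact \emph{equal} as graphs on the common vertex set $V(X)$, by verifying that both switching operations delete the same set of edges and add the same set of edges. First I would recall the general description of the altered edges already recorded above: for a Godsil-McKay partition $\{C_1,\dots,C_t,D\}$, an edge $\{x,y\}$ with $x\in D$ and $y\in C_i$ is deleted precisely when $x\sim y$ and $|N(x)\cap C_i|=\frac{1}{2}|C_i|$, and a non-edge $\{x,y\}$ under the same cardinality condition is added. Since a Godsil-McKay switched graph is obtained from $X$ by exactly these deletions and additions, two such graphs coincide as soon as their deleted-edge sets and their added-edge sets agree.

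Next I would identify the altered edges for the Abiad--Haemers switching, whose cell is $D=V(X)\setminus S$ and whose only non-special cell is $S$. Here the switching condition is $|N(x)\cap S|=\frac{1}{2}|S|=2$ for $x\in V(X)\setminus S$, and a direct computation shows this holds exactly when $x\in S_2$. Thus the altered edges are precisely the pairs in $\{\{x,y\}\mid x\in S_2,\ y\in S\}$, partitioned into the deleted ones (present in $X$) and the added ones (absent in $X$). I would then carry out the analogous identification for $X^S$, whose cell is $D=S$ and whose non-special cells are $T$, $S_0\setminus(S\cup T)$, $S_2$, $S_4$. By Proposition~4.7-(i), for $x\in S$ the equality $|N(x)\cap P|=\frac{1}{2}|P|$ holds for a single cell, namely $P=S_2$, while every other cell yields $0$ or $|P|$ neighbors and so contributes no altered edges. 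Hence the altered edges for $X^S$ are precisely the pairs in $\{\{x,y\}\mid x\in S,\ y\in S_2\}$, again split into deleted and added.

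Finally, since edges are unordered pairs, the indexing sets $\{\{x,y\}\mid x\in S_2,\ y\in S\}$ and $\{\{x,y\}\mid x\in S,\ y\in S_2\}$ describe the same collection of pairs, and their deleted (respectively added) subsets correspond under this identification. Consequently both switchings delete exactly the edges of $X$ joining $S$ to $S_2$ and add exactly the non-edges joining $S$ to $S_2$, so the two switched graphs have identical edge sets and are therefore equal, in particular isomorphic.

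The genuine content lies in matching the single contributing cell $S_2$ on the $X^S$ side (via Proposition~4.7-(i)) with the set of vertices triggering the Abiad--Haemers switching condition, after which the agreement reduces to the observation that an unordered edge does not record which of its endpoints lies in the special cell $D$. Since both Proposition~4.7-(i) and the characterization $|N(x)\cap S|=2\iff x\in S_2$ are already established, I expect no serious obstacle to remain; the remaining verification is purely a matter of bookkeeping over unordered pairs.
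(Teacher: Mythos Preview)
Your proposal is correct and follows essentially the same argument as the paper: both identify the altered edges for the Abiad--Haemers switching via the equivalence $|N(x)\cap S|=2\iff x\in S_2$, identify the altered edges for $X^S$ via Proposition~4.7-(i) (only $P=S_2$ contributes), and conclude that the two switched graphs are in fact equal since the resulting edge sets of unordered pairs coincide.
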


We remark that for $x \in S_2(1,2)$ as a special vertex in $S_2$,
$N(x) \cap T = \{ v_2+v_3, v_3+v_1 \}$, that is,
$|N(x) \cap T| = \frac{2}{3}|T|$,
so $S_2$ is not a Godsil-McKay cell.
Therefore by Lemma~4.6,
we get $|N(x) \cap S_2| = \frac{2}{3}|S_2|$ for $x \in T$,
so $T$ is not a Godsil-McKay cell either. 

\section{Not being isomorphic} 

In this section,
we prove that the graphs in the five families
$X$, $X^{O(0,\nu,0)}$, $X^S$, $X^{S_0 \setminus (S \cup T)}$, $X^{S_4}$
are not isomorphic to each other.
To this end,
we consider the number of common neighbors of three vertices 
as an invariant for isomorphism.
First,
we investigate how the value of the number of common neighbors of three vertices 
changes after switching.
Next,
for each family,
by inspecting the non-zero minimum number of common neighbors of three vertices,
we prove that the graphs in different families are not isomorphic.

\subsection{Formulas that give the number of common neighbors of three vertices
in the switched graph}

Let $X$ be a graph and let $A,B$ be subsets of the vertex set $V(X)$
which are disjoint.
We define
\[ \cN_X[A|B] = \left\{ w \in V(X) \setminus (A \cup B) \,\Bigg|\,
\begin{split}
&w \sim a \, (\forall a \in A), \\
&w \not\sim b \, (\forall b \in B)
\end{split} \right\}. \]
Practically, we consider the case $|A \cup B| = 3$.
For example, for three distinct vertices $x,y,z$ in $V(X)$,
\[ \cN_X[\{x,y\}|\{z\}] = \{ w \in V(X) \setminus \{x,y,z\} \,|\,
w \sim x , \, w \sim y, \, w \not\sim z  \}, \]
but we will write $\cN_X[xy|z]$ instead of $\cN_X[\{x,y\}|\{z\}]$ for simplicity.

Let $\pi = \{C_1, \dots, C_t, C_{t+1} \}$ 
be the orbit partition of a group of automorphisms of $X$.
Assume that $\pi$ is a Godsil-McKay partition with Godsil-McKay cell $D = C_{t+1}$.
Then for any $i \in [t]$,
\[ \{ |N(x) \cap C_i| \,|\, x \in D \} 
= \{0\}, \left\{ \frac12 |C_i| \right\} \text{ or } \{|C_i|\}, \]
so we can decompose the index set $[t]$ depending on these values.
We define
	\begin{align*}
\mathcal{C}_0 &= \left\{ i \in [t] \, \Big| \, \{|N(x) \cap C_i| \,|\, x \in D \} = \{0\} \right\}, \\
\mathcal{C}_{\frac{1}{2}} &= \left\{ i \in [t] \, \Bigg| \, \{|N(x) \cap C_i| \,|\, x \in D \} 
= \left\{ \frac{1}{2}|C_i| \right\} \right\}, \\
\mathcal{C}_1 &= \left\{ i \in [t] \, \Big| \, \{|N(x) \cap C_i| \,|\, x \in D \} = \{|C_i|\} \right\}.
	\end{align*}
Then $[t] = \mathcal{C}_0 \sqcup \mathcal{C}_{\frac12} \sqcup \mathcal{C}_1$.
Let $X'$ be the switched graph 
with respect to $\pi$ with $D = C_{t+1}$.
To investigate the number of common neighbors of three vertices in $X'$,
we consider, for example, the case $x \in D=C_{t+1}$,
$y \in C_k$ and $z \in C_l$,
where $k \in \mathcal{C}_{\frac12}$ and $l \in \mathcal{C}_0 \cup \mathcal{C}_1$.
The set of pairs of vertices involved with switching is
\[ \bigsqcup_{i \in \mathcal{C}_{\frac12}} 
\left\{ \{v,w\} \, \Big| \, v \in D, w \in C_i \right\}, \]
so vertices in 
\[ \bigsqcup_{i \in \mathcal{C}_1} (C_i \cap \cN_X [xyz|\,]) \]
are also common neighbors of $x,y,z$ in $X'$.
On the other hand, in this case, vertices in 
\[ \bigsqcup_{i \in \mathcal{C}_{\frac12} \sqcup \{ t+1 \}} (C_i \cap \cN_X [xyz|\,]) \]
are no longer common neighbors of $x,y,z$ after switching.
However,
vertices in 
\[ \left( \bigsqcup_{i \in \mathcal{C}_{\frac12}} (C_i \cap \cN[yz|x]) \right)
\cup (D \cap \cN_X[xz|y]) \]
become new common neighbors after switching.
Consequently, we get
\[ |\mathcal{N}_{X'}[xyz|\,]|
= \sum_{i \in \mathcal{C}_1} \left|
 C_i \cap\mathcal{N}_{X}[xyz|\,] \right| 
+ \sum_{i \in \mathcal{C}_{\frac{1}{2}}} \left| C_i \cap\mathcal{N}_{X}[yz|x] \right|
+ | D \cap \mathcal{N}_{X}[xz|y] |. \]
For other cases,
we can investigate $\cN_{X'}[xyz|\,]$ by a similar argument as above,
so we get the following formulas on the number of 
common neighbors of three vertices in $X'$.

\begin{theo}{\it
Let $X$ be a graph and 
$\pi = \{C_1, \dots, C_t, C_{t+1} \}$ be the orbit partition of a group of automorphisms.
Assume that $\pi$ is a Godsil-McKay partition with a Godsil-McKay cell $D=C_{t+1}$.
Let $X'$ be the switched graph with respect to $\pi$.
Let $x,y,z$ be three distinct vertices in $V(X)$ and 
$i_x,i_y,i_z$ be indices to which $x,y,z$ belong, respectively.
Then for each of the following ten cases,
the values of $|\mathcal{N}_{X'}[xyz|\,]|$ are given in Table \ref{tb:1}.
\begin{longtable}[h]{lll}
\text{{\it (1)  $x,y,z \notin D$}}, & & \\
$\quad$ \text{{\it (i)} } $i_x,i_y,i_z \notin \mathcal{C}_{\frac12}$, & $\qquad$ &
\text{{\it (ii)} } $i_x \in \mathcal{C}_{\frac12}$ \text{ {\it and} } 
$i_y,i_z \notin \mathcal{C}_{\frac12}$, \\
$\quad$ \text{{\it (iii)} } $i_x, i_y \in \mathcal{C}_{\frac12}$ \text{ {\it and}} 
$i_z \notin \mathcal{C}_{\frac12}$, 
& $\qquad$ &
\text{{\it (iv)} } $i_x,i_y,i_z \in \mathcal{C}_{\frac12}$. \\
\text{{\it (2)  $x \in D$ and $y,z \notin D$}}, & & \\
$\quad$ \text{{\it (i)} } $i_y,i_z \notin \mathcal{C}_{\frac12}$, & $\qquad$ &
\text{{\it (ii)} } $i_y \in \mathcal{C}_{\frac12}$ \text{ {\it and} }
$i_z \notin \mathcal{C}_{\frac12}$, \\
$\quad$ \text{{\it (iii)} } $i_y,i_z \in \mathcal{C}_{\frac12}$. 
& $\qquad$ & \\
\text{{\it (3)  $x,y \in D$ and $z \notin D$}}, & & \\
$\quad$ \text{{\it (i)} } $i_z \notin \mathcal{C}_{\frac12}$, & $\qquad$ &
\text{{\it (ii)} } $i_z \in \mathcal{C}_{\frac12}$. \\
\text{{\it (4)  $x,y,z \in D$}}. & &
\end{longtable}
}
\end{theo}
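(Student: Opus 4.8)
The plan is to reduce every one of the ten cases to a single bookkeeping principle governing how a candidate common neighbor's adjacencies change under switching. First I would record the basic fact, immediate from the description of Godsil-McKay switching, that a pair $\{u,v\}$ has its adjacency reversed in $X'$ precisely when exactly one of $u,v$ lies in $D$ and the other lies in a cell $C_i$ with $i \in \mathcal{C}_{\frac12}$; every other pair keeps its $X$-adjacency. Thus for a candidate vertex $w$ and any target $v \in \{x,y,z\}$, the relation $w \sim_{X'} v$ agrees with $w \sim_X v$ unless the pair $\{w,v\}$ is of this special type, in which case it is negated.

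Next I would classify the candidate $w$ into three mutually exclusive types according to its cell: (A) $w \in D$; (B) $w \in C_j$ with $j \notin \mathcal{C}_{\frac12}$; (C) $w \in C_j$ with $j \in \mathcal{C}_{\frac12}$. For a type-(B) vertex every adjacency is preserved, so $w$ contributes to $\mathcal{N}_{X'}[xyz|\,]$ exactly when $w \in \mathcal{N}_X[xyz|\,]$. For a type-(A) vertex only the adjacencies to targets lying in $\mathcal{C}_{\frac12}$-cells are reversed, and for a type-(C) vertex only the adjacencies to targets lying in $D$ are reversed. The effect of a reversal is simply to move the corresponding target from the adjacent side to the non-adjacent side of the bracket $\mathcal{N}_X[\cdot\,|\,\cdot]$; so in each case the set of contributing $w$ of that type is cut out by an explicit $\mathcal{N}_X$-condition obtained from $\mathcal{N}_X[xyz|\,]$ by transferring across the bar exactly those among $x,y,z$ whose pair with $w$ is reversed.

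With this principle in hand, each of the ten cases is handled uniformly: I would run through the three types (A), (B), (C), read off the reversed targets from the positions of $x,y,z$ relative to $D$ and $\mathcal{C}_{\frac12}$, and sum the resulting $\mathcal{N}_X$-counts over the appropriate index ranges. Two simplifications keep the formulas short. First, whenever one of $x,y,z$ lies in $D$, any $w$ in a cell $C_i$ with $i \in \mathcal{C}_0$ is non-adjacent in $X$ to that target and stays so, hence the $\mathcal{C}_0$ part of the sum vanishes and only $\sum_{i \in \mathcal{C}_1}$ survives; this is exactly why the worked case $x \in D$, $i_y \in \mathcal{C}_{\frac12}$, $i_z \notin \mathcal{C}_{\frac12}$ produces $\sum_{i \in \mathcal{C}_1}|C_i \cap \mathcal{N}_X[xyz|\,]| + \sum_{i \in \mathcal{C}_{\frac12}}|C_i \cap \mathcal{N}_X[yz|x]| + |D \cap \mathcal{N}_X[xz|y]|$, and I would use that computation as the template for the remaining nine. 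Second, the vertices $x,y,z$ themselves are automatically excluded as candidates by the definition of $\mathcal{N}$, so no separate argument is needed to avoid double counting.

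The work is entirely systematic, so the main obstacle is not conceptual but organizational: keeping the ten cases, and within each the three types of $w$, consistent, and in particular tracking which adjacency conditions are negated so that the correct targets land on the non-adjacent side of each $\mathcal{N}_X$-bracket. A secondary point requiring care is recognizing in each case which of the $\mathcal{C}_0$ or $\mathcal{C}_1$ contributions collapse to zero or merge, so that the final entries in Table~\ref{tb:1} are written in their reduced form rather than as sums over all of $[t]$.
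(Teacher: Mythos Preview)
Your proposal is correct and takes essentially the same approach as the paper: the paper also argues by classifying a candidate common neighbor $w$ according to its cell and tracking which of its adjacencies to $x,y,z$ are reversed by the switching, working out the single illustrative case (2)-(ii) in detail and declaring the remaining nine cases analogous. Your (A)/(B)/(C) typing is simply a more systematic packaging of that same bookkeeping; one minor remark is that the table does not actually drop the vanishing $\mathcal{C}_0$ summands in cases (2) and (3), so matching it requires less reduction than you anticipate.
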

{\small
\begin{center}
\def\arraystretch{2}
	$ \begin{array}{|c||c|} \hline
{\text {\it (1)-(i)}} & |\mathcal{N}_{X}[xyz|\,]| \\ \hline
{\text {\it (1)-(ii)}} &\displaystyle
\sum_{i \in \mathcal{C}_0 \sqcup \mathcal{C}_{\frac{1}{2}} \sqcup \mathcal{C}_1} \left| 
C_i \cap\mathcal{N}_{X}[xyz|\,] \right| 
+| D \cap \mathcal{N}_{X}[yz|x] | \\ \hline
{\text {\it (1)-(iii)}} &\displaystyle
\sum_{i \in \mathcal{C}_0 \sqcup \mathcal{C}_{\frac{1}{2}} \sqcup \mathcal{C}_1} \left| 
C_i \cap\mathcal{N}_{X}[xyz|\,] \right| 
+| D \cap \mathcal{N}_{X}[z|xy] | \\ \hline
{\text {\it (1)-(iv)}} &\displaystyle
\sum_{i \in \mathcal{C}_0 \sqcup \mathcal{C}_{\frac{1}{2}} \sqcup \mathcal{C}_1} \left| 
C_i \cap\mathcal{N}_{X}[xyz|\,] \right| 
+| D \cap \mathcal{N}_{X}[\,|xyz] | \\ \hline
{\text {\it (2)-(i)}} &\displaystyle
\sum_{i \in \mathcal{C}_0 \sqcup \mathcal{C}_1 \sqcup \{t+1\}} \left| 
C_i \cap\mathcal{N}_{X}[xyz|\,] \right| 
+ \sum_{i \in \mathcal{C}_{\frac{1}{2}}} \left| C_i \cap\mathcal{N}_{X}[yz|x] \right| \\ \hline
{\text {\it (2)-(ii)}} &\displaystyle
\sum_{i \in \mathcal{C}_0 \sqcup \mathcal{C}_1} \left| 
C_i \cap\mathcal{N}_{X}[xyz|\,] \right| 
+ \sum_{i \in \mathcal{C}_{\frac{1}{2}}} \left| C_i \cap\mathcal{N}_{X}[yz|x] \right|
+ | D \cap \mathcal{N}_{X}[xz|y] | \\ \hline
{\text {\it (2)-(iii)}} &\displaystyle
\sum_{i \in \mathcal{C}_0 \sqcup \mathcal{C}_1} \left| 
C_i \cap\mathcal{N}_{X}[xyz|\,] \right| 
+ \sum_{i \in \mathcal{C}_{\frac{1}{2}}} \left| C_i \cap\mathcal{N}_{X}[yz|x] \right|
+ | D \cap \mathcal{N}_{X}[x|yz] | \\ \hline
{\text {\it (3)-(i)}} &\displaystyle
\sum_{i \in \mathcal{C}_0 \sqcup \mathcal{C}_1 \sqcup \{t+1\}} \left| 
C_i \cap\mathcal{N}_{X}[xyz|\,] \right| 
+ \sum_{i \in \mathcal{C}_{\frac{1}{2}}} \left| C_i \cap\mathcal{N}_{X}[z|xy] 
\right| \\ \hline
{\text {\it (3)-(ii)}} &\displaystyle
\sum_{i \in \mathcal{C}_0 \sqcup \mathcal{C}_1} \left| 
C_i \cap\mathcal{N}_{X}[xyz|\,] \right| 
+ \sum_{i \in \mathcal{C}_{\frac{1}{2}}} \left| C_i \cap\mathcal{N}_{X}[z|xy] \right|
+ | D \cap \mathcal{N}_{X}[xy|z] | \\ \hline
{\text {\it (4)}} &\displaystyle
 \sum_{i \in \mathcal{C}_0 \sqcup \mathcal{C}_1 \sqcup \{t+1\}} \left| 
C_i \cap\mathcal{N}_{X}[xyz|\,] \right| 
+ \sum_{i \in \mathcal{C}_{\frac{1}{2}}} \left| C_i \cap\mathcal{N}_{X}[\,|xyz] 
\right| \\ \hline
	\end{array}\label{tb:1}$
\end{center}
\begin{center}
Table \ref{tb:1}. The number of common neighbors of three vertices in $X'$
\end{center}
}

\subsection{Investigating the non-zero minimum number of 
common neighbors of three vertices}

We use Table~\ref{tb:1} to 
investigate the number of common neighbors of three vertices for each family.
It is certainly difficult to determine all the possible values,
but our goal is to prove that the graphs in the five families
are not isomorphic to each other,
so it is sufficient to find an easier invariant for isomorphism.
From this point of view,
we calculate the non-zero minimum number of common neighbors of three vertices.

\begin{prop}{\it 
Let $X$ be the symplectic graph $Sp(2\nu,2)$ of order $2\nu$ and
let $x,y,z$ be three distinct vertices of $X$.
Then,
\[ |\mathcal{N}_X [xyz|\,]| = 
\begin{cases} 
0 &\text{if $x+y+z = \zero$,} \\
2^{2\nu-3} &\text{otherwise.}
\end{cases} \]
In particular,
the non-zero minimum number of common neighbors of three vertices in $Sp(2\nu,2)$ 
is $2^{2\nu-3}$. }
\end{prop}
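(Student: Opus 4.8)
The plan is to realize the set of common neighbors of three vertices as the solution set of a system of linear equations over $\mathbb{F}_2$, exactly as the paper suggests in the introduction, and then count solutions by a rank computation. Recall that for vertices $u,w \in V(Sp(2\nu,2))$ we have $u \sim w$ if and only if $u^T K w = 1$. Thus for three distinct vertices $x,y,z$, a vertex $w \in V(X) \setminus \{x,y,z\}$ lies in $\mathcal{N}_X[xyz|\,]$ precisely when
\[
x^T K w = 1, \quad y^T K w = 1, \quad z^T K w = 1,
\]
i.e. when $Mw = {\bm 1}_3$, where
\[
M = \begin{bmatrix} x^T K \\ y^T K \\ z^T K \end{bmatrix}.
\]
So the first step is to reduce the problem to counting $w \in \mathbb{F}_2^{2\nu}$ with $Mw = {\bm 1}_3$ and then subtracting off any of $x,y,z$ (or $\zero$) that happen to satisfy the system.

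The second step is to analyze the rank of $M$. Since $K$ is invertible, $\rank M = \rank\begin{bmatrix} x^T \\ y^T \\ z^T \end{bmatrix}$, which is the dimension of $\langle x,y,z \rangle$. Because $x,y,z$ are distinct nonzero vectors, this rank is $3$ unless there is a linear dependence among them; over $\mathbb{F}_2$ the only possible nontrivial dependence among three distinct nonzero vectors is $x+y+z = \zero$, in which case the rank drops to $2$. This dichotomy is precisely the case split in the statement. When $x+y+z = \zero$, I would check that the system $Mw = {\bm 1}_3$ is \emph{inconsistent}: the three left-hand rows sum to $(x+y+z)^T K = \zero$, while the right-hand side ${\bm 1}_3$ sums to $1 \neq 0$, so there is no solution at all, giving $|\mathcal{N}_X[xyz|\,]| = 0$.

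In the generic case $\rank M = 3$, the map $T_M$ is surjective onto $\mathbb{F}_2^3$, so the affine solution set of $Mw = {\bm 1}_3$ is a coset of $\Ker T_M$ and has size $2^{2\nu - 3}$, by the same bijection-with-kernel argument used for $|S_4|$ in Lemma~4.5. The final step is to confirm that none of $x,y,z$ (nor $\zero$) is itself counted, so that no correction to $2^{2\nu-3}$ is needed: $\zero$ fails $x^T K \zero = 0 \neq 1$, and each of $x,y,z$ fails its own equation since $x^T K x = y^T K y = z^T K z = 0$ by the alternating (symplectic) property of $K$ over $\mathbb{F}_2$. Hence every solution $w$ automatically lies in $V(X) \setminus \{x,y,z\}$, and $|\mathcal{N}_X[xyz|\,]| = 2^{2\nu-3}$. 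I do not anticipate a genuine obstacle here; the only point requiring slight care is verifying that the excluded vertices $x,y,z,\zero$ never satisfy the system, which rests entirely on $K$ being alternating — a one-line check but one that must not be skipped, since it is what guarantees the clean value $2^{2\nu-3}$ rather than $2^{2\nu-3}$ minus a correction term.
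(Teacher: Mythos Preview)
Your proposal is correct and follows essentially the same approach as the paper: set up the $3 \times 2\nu$ matrix $M$, use the case split on whether $x+y+z=\zero$ to determine consistency and rank, and count solutions as a coset of $\Ker T_M$. If anything, you are slightly more careful than the paper, which simply asserts $\mathcal{N}_X[xyz|\,] = \{w \in \mathbb{F}_2^{2\nu} \mid Mw = {\bm 1}_3\}$ without explicitly verifying (as you do via the alternating property of $K$) that none of $x,y,z,\zero$ satisfy the system.
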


\begin{proof}
First,
we assume $x+y+z = \zero$.
Suppose that there exists $w \in \mathcal{N}_X [xyz|\,]$.
Then $x^T K w = y^T K w = z^T K w = 1$,
but $1 = 1+1+1 = x^T K w + y^T K w + z^T K w = (x+y+z)^T K w = \zero^T K w = 0$.
This is a contradiction, so $|\mathcal{N}_X [xyz|\,]| = 0$.

Next,
we assume $x+y+z \neq \zero$.
Let
\[ M = \begin{bmatrix} x^T K \\ y^T K \\ z^T K \end{bmatrix}. \]
Then $\mathcal{N}_X [xyz|\,] = \{ w \in \mathbb{F}_2^{2\nu} \, | \, 
Mw = [111]^T \}$.
Since $x+y+z \neq \zero$,
$x,y,z$ are linearly independent,
so $\rank M = 3$.
Therefore the system of equations $Mw = {\bm 1}_3$ has a solution.
This implies that there is a bijection from $\cN_X [xyz|\,]$ to $\Ker T_M$.
The dimension of $\Ker T_M$ is $2\nu-3$,
so we get $|\mathcal{N}_X [xyz|\,]| = |\Ker T_M| = 2^{2\nu-3}$.
\end{proof}

\begin{prop}{\it 
Let $X$ be the symplectic graph $Sp(2\nu,2)$ of order $2\nu$ and let $X' = X^S$.
Take $x \in S_2(1,2)$, $y \in S_2(1,3)$ and set $z = x+y$.
Then $z \in S_2(2,3)$ and $|\mathcal{N}_{X'} [xyz|\,]| = 1$.
Therefore,
the non-zero minimum number of common neighbors of three vertices in $X^S$ 
is $1$.}
\end{prop}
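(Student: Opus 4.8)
The plan is to dispatch the two assertions separately, reducing the computation of $|\mathcal{N}_{X'}[xyz|\,]|$ to a single row of Table~\ref{tb:1} and then evaluating the two terms appearing there.

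First I would establish $z \in S_2(2,3)$ by a direct bilinear computation. By definition $x \in S_2(1,2)$ means $x^T K v_1 = x^T K v_2 = 1$ and $x^T K v_3 = x^T K v_4 = 0$, while $y \in S_2(1,3)$ means $y^T K v_1 = y^T K v_3 = 1$ and $y^T K v_2 = y^T K v_4 = 0$. Since $z = x+y$ and each map $w \mapsto w^T K v_i$ is linear over $\mathbb{F}_2$, I would read off $z^T K v_1 = 0$, $z^T K v_2 = 1$, $z^T K v_3 = 1$, $z^T K v_4 = 0$, so $\#\{i : z^T K v_i = 1\} = 2$ with the two ones at $v_2, v_3$; hence $z \in S_2(2,3)$. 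Along the way I would note $z = x+y \neq \zero$ (as $x \neq y$) and that $x,y,z$ are pairwise distinct, so $z$ is a genuine vertex and the triple is admissible.

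For the value $|\mathcal{N}_{X'}[xyz|\,]|$ I would first locate the correct case of Table~\ref{tb:1}. Recall $X' = X^S$ has Godsil-McKay cell $D = S$, and by Proposition~4.7-(i) the remaining cells classify as $\mathcal{C}_0 = \{T,\, S_0\setminus(S\cup T)\}$, $\mathcal{C}_{\frac{1}{2}} = \{S_2\}$ and $\mathcal{C}_1 = \{S_4\}$. Since $x,y,z$ all lie in $S_2$, none lies in $D$ and all three indices lie in $\mathcal{C}_{\frac{1}{2}}$; this is exactly case (1)-(iv), giving
\[ |\mathcal{N}_{X'}[xyz|\,]| = \sum_{i \in \mathcal{C}_0 \sqcup \mathcal{C}_{\frac{1}{2}} \sqcup \mathcal{C}_1} |C_i \cap \mathcal{N}_X[xyz|\,]| + |S \cap \mathcal{N}_X[\,|xyz]|. \]

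The key simplification I would then exploit is that $x + y + z = x + y + (x+y) = \zero$. By Proposition~5.2 this forces $|\mathcal{N}_X[xyz|\,]| = 0$, and since the first sum merely counts common neighbors of $x,y,z$ lying outside $S$, that whole sum vanishes. Thus the answer equals the second term $|S \cap \mathcal{N}_X[\,|xyz]|$, the number of $v_i \in S$ adjacent to none of $x,y,z$. Using the adjacency data above, $v_1$ and $v_2$ are adjacent to $x$ and $v_3$ is adjacent to $y$, while $v_4$ satisfies $x^T K v_4 = y^T K v_4 = z^T K v_4 = 0$, so $v_4$ is the unique such vertex; hence the term equals $1$ and $|\mathcal{N}_{X'}[xyz|\,]| = 1$. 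Since the number of common neighbors of a triple is a non-negative integer, exhibiting a triple attaining $1$ shows the non-zero minimum is $1$. I expect no genuine obstacle here, only bookkeeping care: correctly reading off the cell classification into $\mathcal{C}_0, \mathcal{C}_{\frac{1}{2}}, \mathcal{C}_1$ and matching $x,y,z$ to the right row of the table. The conceptual crux is the observation that $x+y+z = \zero$ annihilates the summation term via Proposition~5.2, so the value is pinned down entirely by the single non-neighbor $v_4 \in S$.
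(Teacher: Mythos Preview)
Your proof is correct and follows essentially the same route as the paper: you identify case (1)-(iv) of Table~\ref{tb:1} via Proposition~4.7-(i), kill the first summand using $x+y+z=\zero$ and Proposition~5.2, and then compute $|S \cap \mathcal{N}_X[\,|xyz]| = |\{v_4\}| = 1$ directly from the bilinear values. The paper's argument is identical in structure, only slightly terser in the bookkeeping.
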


\begin{proof}
Since
\[ z^T K v_i = x^T K v_i + y^T K v_i =
\begin{cases}
1 & \quad \text{if $i = 2,3$,} \\
0 & \quad \text{if $i = 1,4$,}
\end{cases} \]
we have $z \in S_2(2,3)$.
We recall that for $C \in \{ T, S_0 \setminus ( S \cup T ), S_2, S_4 \}$ and for $v \in S$,
\[ |N(v) \cap C| = \begin{cases}
0 & \quad \text{if $C = T$ or $S_0 \setminus (S \cup T)$,} \\
|C| & \quad \text{if $C = S_4$,} \\
\frac12 |C| & \quad \text{if $C = S_2$,}
\end{cases}
 \]
by Proposition~4.7-(i).
Thus,
	\begin{align*}
|\mathcal{N}_{X'} [xyz|\,]| &=  
|(T \cup ( S_0 \setminus (S \cup T)) \cup S_4 \cup S_2) \cap \mathcal{N}_{X} [xyz|\,]| \\ 
& \quad + |S \cap \mathcal{N}_{X} [\,|xyz]| \tag{by (1)-(iv) in Table~\ref{tb:1}} \\
&= |S \cap \mathcal{N}_{X} [\,|xyz]| \tag{by Proposition~5.2} \\
&= |\{ v_4 \}| \\
&=  1.
	\end{align*}
\end{proof}

Next,
we consider the non-zero minimum number of 
common neighbors of three vertices in $X^{O(0,\nu,0)}$.
If we decompose a vector $x \in \mathbb{F}_2^{2\nu}$
into $\nu$ blocks as follows:
\[ x = \begin{bmatrix}
x_1 \\ \vdots \\ x_{\nu}
\end{bmatrix} \qquad (x_i \in \mathbb{F}_2^2), \]
we can see
\[
\begin{bmatrix}
1100 & \cdots & 00 \\
0011 & \cdots & 00 \\
\vdots & \ddots & \vdots \\
0000 & \cdots & 11 
\end{bmatrix}
\begin{bmatrix}
x_1 \\ x_2 \\ \vdots \\ x_{\nu}
\end{bmatrix}
\equiv
\begin{bmatrix}
\wt(x_1) \\
\wt(x_2) \\
\vdots \\
\wt(x_{\nu})
\end{bmatrix} \quad (\MOD 2).
\]
Thus, for a vector $x \in \mathbb{F}_2^{2\nu}$,
there exists $j$ such that $x \in O(i,j,k)$ for some $i,k$ if and only if 
there exists a vector ${\bm b} \in \mathbb{F}_2^{\nu}$ whose weight is $j$ such that 
$(I_{\nu} \otimes [11]) x = {\bm b}$, 
so we can regard also an orbit of $\Aute$ 
as the solution set of a system of linear equations.

Recall that for an orbit $O(i,j,k)$ of $\Aute$ and for a vertex $v \in O(0,\nu,0)$,
\[ |N(v) \cap O(i,j,k)| = \begin{cases}
\frac{1}{2}|O(i,j,k)| &\text{ if $j \geq 1$,} \\
|O(i,j,k)| &\text{ if $j = 0$, $i : \odd$,} \\
0 &\text{ if $j = 0$, $i : \even$,}
\end{cases}
\]
by Proposition~3.5.
For three vertices $x,y,z$,
define the $(\nu+3) \times 2\nu$ matrix $M$ as follows:
\[ M = \begin{bmatrix}
x^T K \\
y^T K \\
z^T K \\
I_{\nu} \otimes [11]
 \end{bmatrix}. \]

\begin{lemm}{\it 
Let $X$ be the symplectic graph $Sp(2\nu,2)$ of order $2\nu$ and 
let $X' = X^{O(0,\nu,0)}$.
For three distinct vertices $x,y,z$,
$|\cN_{X'} [xyz|\,]|$ is a multiple of $2^{\nu-2}$.
}
\end{lemm}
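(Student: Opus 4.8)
The plan is to turn every entry of Table~\ref{tb:1} into a count of solutions of a single linear system governed by $M$, and then read off the divisibility from $\rank M$. The starting point is the observation preceding the statement: the parity vector $\beta(w):=(I_\nu\otimes[11])\,w\in\mathbb{F}_2^\nu$ records exactly the weight-one blocks of $w$, so by Proposition~3.5 a vertex $w$ lies in $\bigcup_{i\in\mathcal{C}_0\sqcup\mathcal{C}_1}C_i$, in $\bigcup_{i\in\mathcal{C}_{\frac12}}C_i$, or in $D=O(0,\nu,0)$ exactly when $\beta(w)=\mathbf{0}$, $\beta(w)\notin\{\mathbf{0},\mathbf{1}_\nu\}$, or $\beta(w)=\mathbf{1}_\nu$, respectively. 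I would first record that every summand occurring in the ten rows of the table has the shape $\#\{w\,|\,(x^TKw,y^TKw,z^TKw)=\mathbf{a},\ \beta(w)\in R\}$ for a fixed adjacency pattern $\mathbf{a}\in\mathbb{F}_2^3$ and a range $R\subseteq\mathbb{F}_2^\nu$, and that within each row the ranges $R$ of the summands partition $\mathbb{F}_2^\nu$. Writing $n(\mathbf{a},\mathbf{b})=\#\{w\in\mathbb{F}_2^{2\nu}\,|\,Mw=(\mathbf{a},\mathbf{b})\}$, where $(\mathbf{a},\mathbf{b})\in\mathbb{F}_2^{\nu+3}$ is the vector with first three entries $\mathbf{a}$ and last $\nu$ entries $\mathbf{b}$, each row then collapses to
\[ |\mathcal{N}_{X'}[xyz|\,]|=\sum_{\mathbf{b}\in\mathbb{F}_2^\nu}n(\mathbf{a}(\mathbf{b}),\mathbf{b}), \]
where $\mathbf{a}(\mathbf{b})$ is the pattern the row attaches to the unique summand whose range contains $\mathbf{b}$.

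Next I would compute $\rank M$. The bottom block $I_\nu\otimes[11]$ has rank $\nu$; denote its row space by $W$. Under the identification $\mathbb{F}_2^{2\nu}/W\cong\mathbb{F}_2^\nu$ sending a vector to its tuple of blockwise parities, the row $v^TK$ reduces to $\beta(v)$, whence
\[ \rank M=\nu+\dim\langle\beta(x),\beta(y),\beta(z)\rangle. \]
Put $s=\dim\langle\beta(x),\beta(y),\beta(z)\rangle\in\{0,1,2,3\}$. Then each system $Mw=(\mathbf{a},\mathbf{b})$ is either inconsistent or has exactly $2^{2\nu-\rank M}=2^{\nu-s}$ solutions, so $n(\mathbf{a},\mathbf{b})\in\{0,2^{\nu-s}\}$ and the displayed sum is automatically a multiple of $2^{\nu-s}$.

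If $s\le 2$, then $2^{\nu-s}$ is a multiple of $2^{\nu-2}$ and we are done. The only remaining case is $s=3$, which is the heart of the matter, since there the individual terms are merely $2^{\nu-3}$. Here $\rank M=\nu+3$ equals the number of rows of $M$, so $T_M$ is surjective and $n(\mathbf{a},\mathbf{b})=2^{\nu-3}$ for every pair $(\mathbf{a},\mathbf{b})$; because the ranges partition $\mathbb{F}_2^\nu$, the sum runs once over all $2^\nu$ values of $\mathbf{b}$ and equals $2^{\nu-3}\cdot 2^\nu=2^{2\nu-3}$, a multiple of $2^{\nu-2}$ as $\nu\ge 3$. (Incidentally, one checks that $s=3$ can occur only in rows (1)-(iv) and (2)-(iii).)

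The step I expect to require the most care is justifying the clean collapse to $\sum_{\mathbf{b}}n(\mathbf{a}(\mathbf{b}),\mathbf{b})$: the sets $C_i\cap\mathcal{N}_X[A|B]$ exclude $x,y,z$ by definition and no orbit contains $\mathbf{0}$, whereas $n(\mathbf{a},\mathbf{b})$ counts all of $\mathbb{F}_2^{2\nu}$, so a priori $\mathbf{0},x,y,z$ could be over-counted. I would settle this by inspecting the ten rows and noting that in the summand whose range contains $\beta(v)$ the vertex $v\in\{x,y,z\}$ always lies in the adjacency part $A$ --- equivalently, a vertex forced to be non-adjacent always sits in a summand whose range excludes its own parity class --- so that $v^TKv=0$ makes $v$ fail the pattern; and $\mathbf{0}$ can only meet the all-ones pattern, whose range never contains $\beta=\mathbf{0}$. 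Hence no correction terms arise and the collapse is exact. This case-checking, rather than the rank computation, is the real obstacle.
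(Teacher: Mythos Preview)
Your proof is correct and follows the same strategy as the paper's: split according to $\rank M$ and read off divisibility from solution counts of the linear systems $Mw=(\mathbf a,\mathbf b)$. Your presentation is tighter in two respects. First, you make the rank explicit via $\rank M=\nu+\dim\langle\beta(x),\beta(y),\beta(z)\rangle$ and treat all ten rows of Table~\ref{tb:1} uniformly by the collapse $\sum_{\mathbf b}n(\mathbf a(\mathbf b),\mathbf b)$, whereas the paper samples one row and asserts the others are similar. Second, your observation that the full-rank case $s=3$ can arise only in rows (1)-(iv) and (2)-(iii) is sharper than the paper, which illustrates its full-rank Case~1 with row (1)-(ii) --- a row in which $\beta(y)=\beta(z)=\mathbf 0$ forces $s\le 1$, so the example is actually vacuous there. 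Your general argument that $x,y,z,\mathbf 0$ never need to be subtracted (because in the summand containing $\beta(v)$ the vertex $v$ always sits on the adjacency side, and the pattern $\mathbf a=\mathbf 0$ is never attached to the range containing $\mathbf b=\mathbf 0$) is exactly the verification the paper leaves as ``we can confirm that $x,y,z,\zero$ are not a solution.'' One wording slip: where you write ``$\mathbf 0$ can only meet the all-ones pattern,'' you mean the all-zeros adjacency pattern $\mathbf a=(0,0,0)$, since $v^TK\mathbf 0=0$ for every $v$.
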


\begin{proof}
For three distinct vertices $x,y,z$,
we consider two cases.

Case~1: Suppose $M$ has full rank.
We only consider the case (1)-(ii) of Theorem~5.1,
but on other cases,
we can consider similarly.
Assume that $x \in O(i,j,k)$ with $1 \leq j \leq \nu-1$,
$y \in O(l,0,m)$ and $z \in O(l',0,m')$.
According to Table~\ref{tb:1},
\[ |\mathcal{N}_{X'}[xyz|\,]|
= \sum_{i \in \mathcal{C}_0 \sqcup \mathcal{C}_1} \left| 
C_i \cap\mathcal{N}_{X}[xyz|\,] \right|
+ \sum_{i \in \mathcal{C}_{\frac12}} \left| 
C_i \cap\mathcal{N}_{X}[xyz|\,] \right|
+| D \cap \mathcal{N}_{X}[yz|x] |. \]
The first term is equal to 
$\# \{ w \in \mathbb{F}_2^{2\nu} \, | \, Mw = [111 \zero_{\nu}^T]^T \}$.
Since $M$ has full rank, it is $2^{2\nu-(\nu+3)} = 2^{\nu-3}$.
The second term is equal to
\[
\sum_{ \substack{ {\bm b} \in \mathbb{F}_2^{\nu}, \\ 
1 \leq \wt({\bm b}) \leq \nu -1 }}
\# \left\{
w \in \mathbb{F}_2^{2\nu} \, \Bigg| \, Mw = 
\begin{bmatrix} 1 \\ 1 \\ 1 \\ {\bm b} \end{bmatrix} \right\},
\]
but $M$ has full rank, so it is $(2^{\nu} - 2) \cdot 2^{\nu-3}$.
The third term is equal to 
$\# \{ w \in \mathbb{F}_2^{2\nu} \setminus \{ x \} \, | \, Mw = [011 {\bm 1}_{\nu}^T]^T \}$,
but since $x \in O(i,j,k)$ with $1 \leq j \leq \nu-1$,
$(I_{\nu} \otimes [11]^T)x \neq {\bm 1}_{\nu}$.
Thus, $x$ is not a solution of $Mw = [011 {\bm 1}_{\nu}^T]^T$,
so we get $| D \cap \mathcal{N}_{X}[yz|x] | = 2^{\nu-3}$.
Consequently,
\[ |\mathcal{N}_{X'}[xyz|\,]| = 
2^{\nu-3} + (2^{\nu} - 2) \cdot 2^{\nu-3} + 2^{\nu-3} = 2^{2\nu-3}. \]
In particular,
$|\cN_{X'} [xyz|\,]|$ is a multiple of $2^{\nu-2}$.
(Note that on other cases,
if $M$ has full rank,
then $|\cN_{X'} [xyz|\,]| = 2^{2\nu-3}$.)

Case~2: Suppose $M$ does not have full rank,
that is,
$\rank M = \nu, \nu +1$ or $\nu+2$.
We argue similarly to the case~1.
For each case,
there exist proper subsets $A,B,A',B'$ of $V(X)$ such that
\[ |\mathcal{N}_{X'}[xyz|\,]|
= \sum_{i \in \mathcal{C}_0 \sqcup \mathcal{C}_1} \left|
C_i \cap\mathcal{N}_{X}[xyz|\,] \right|
+ \sum_{i \in \mathcal{C}_{\frac12}} \left| 
C_i \cap\mathcal{N}_{X}[A|B] \right|
+| D \cap \mathcal{N}_{X}[A'|B'] | \]
and we can confirm that 
$x,y,z,\zero$ are not a solution of the system of linear equations 
determined by each term.
Thus,
each term is a multiple of $2^{2\nu-\rank M}$,
but $\rank M \leq \nu + 2$ in this case.
Therefore, $|\cN_{X'} [xyz|\,]|$ is a multiple of $2^{\nu-2}$.
\end{proof}

Fortunately,
we can take three vertices that give $|\cN_{X'} [xyz|\,]| = 2^{\nu-2}$.

\begin{prop}{\it
Let $X$ be the symplectic graph $Sp(2\nu,2)$ of order $2\nu$ and let $X' = X^{O(0,\nu,0)}$.
Then the non-zero minimum number of common neighbors 
of three vertices in $X^{O(0,\nu.0)}$ is $2^{\nu-2}$. }
\end{prop}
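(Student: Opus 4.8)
The plan is to combine the divisibility already proved in Lemma~5.4 with a single explicit triple realizing the bound. By Lemma~5.4, for any three distinct vertices $x,y,z$ the quantity $|\cN_{X'}[xyz|\,]|$ is a multiple of $2^{\nu-2}$; hence every non-zero value is at least $2^{\nu-2}$, and the non-zero minimum is $\geq 2^{\nu-2}$. It therefore suffices to exhibit one triple attaining exactly $2^{\nu-2}$.

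For the witness I would take $x = e_1$, $y = e_3$ and $z = x+y = e_1+e_3$; for $\nu \geq 3$ these are three distinct non-zero vectors. In block form $x$ and $y$ each have a single weight-one block and $z$ has two, so all three lie in orbits $O(i,j,k)$ with $j \geq 1$, i.e. $i_x,i_y,i_z \in \mathcal{C}_{\frac12}$, placing us in case~(1)-(iv) of Theorem~5.1. Since $x+y+z = \zero$, Proposition~5.2 gives $\cN_X[xyz|\,] = \emptyset$, so the summation term in the Table~\ref{tb:1} formula for case~(1)-(iv) vanishes and
\[ |\cN_{X'}[xyz|\,]| = |D \cap \cN_X[\,|xyz]|, \qquad D = O(0,\nu,0). \]
Thus I am reduced to counting the vertices $w \in O(0,\nu,0)$ that are non-adjacent to each of $x,y,z$.

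The crux is to turn this count into a rank computation. A vector $w$ lies in $O(0,\nu,0)$ exactly when $(I_\nu \otimes [11])w = {\bm 1}_\nu$, and non-adjacency to $x,y,z$ means $x^T K w = y^T K w = z^T K w = 0$; since $z = x+y$ the third equation is redundant. Hence $|D \cap \cN_X[\,|xyz]|$ is the number of solutions of
\[ \begin{bmatrix} x^T K \\ y^T K \\ I_\nu \otimes [11] \end{bmatrix} w = \begin{bmatrix} 0 \\ 0 \\ {\bm 1}_\nu \end{bmatrix}. \]
Writing $L$ for the row space of $I_\nu \otimes [11]$, the key fact is that $x^T K \in L$ if and only if every $2$-block of $x$ lies in $\{[00]^T,[11]^T\}$, that is, $x$ has no weight-one block (the factor $K$ only swaps the two entries of each block, which preserves this set). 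Since $x$, $y$ and $x+y=z$ each have a weight-one block, none of $x^TK$, $y^TK$, $x^TK+y^TK$ lies in $L$; therefore $x^TK$ and $y^TK$ are linearly independent modulo $L$, and the $(\nu+2)\times 2\nu$ coefficient matrix has full row rank $\nu+2$. Consequently the system is consistent with exactly $2^{2\nu-(\nu+2)} = 2^{\nu-2}$ solutions, giving $|\cN_{X'}[xyz|\,]| = 2^{\nu-2}$.

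The hard part will be precisely this row-space computation: one must verify cleanly that membership of $x^T K$ in the row space of $I_\nu \otimes [11]$ is governed solely by the absence of weight-one blocks, and then use the hypothesis that $z=x+y$ still has a weight-one block to rule out a rank drop. Everything else — the reduction through case~(1)-(iv) of Theorem~5.1, the vanishing from Proposition~5.2, and the final solution count — is routine once this is in place. Combining the attained value $2^{\nu-2}$ with the lower bound from Lemma~5.4 then shows that the non-zero minimum number of common neighbors of three vertices in $X^{O(0,\nu,0)}$ is exactly $2^{\nu-2}$.
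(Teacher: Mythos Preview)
Your proof is correct and follows essentially the same strategy as the paper: invoke Lemma~5.4 for the lower bound, then exhibit a triple with $z=x+y$ lying entirely in $\mathcal{C}_{\frac12}$ orbits, so that case~(1)-(iv) of Theorem~5.1 together with Proposition~5.2 reduces the count to $|D\cap\cN_X[\,|xyz]|$, which is then computed as $2^{\nu-2}$ via a rank-$(\nu+2)$ linear system. The only difference is cosmetic: the paper takes $x,y\in O(0,2,\nu-2)$ while you take the simpler $x=e_1,\ y=e_3\in O(0,1,\nu-1)$, and you spell out the row-space argument for full rank that the paper merely asserts.
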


\begin{proof}
Pick $x = [101000 \zero_{2\nu-6}]^T$,
$y = [100010 \zero_{2\nu-6}] \in O(0,2,\nu-2)$ and set $z = x+y$.
Then $z = [001010 \zero_{2\nu-6}] \in O(0,2,\nu-2)$.
Thus, by the case (1)-(iv) of Theorem~5.1, we get
$|\cN_{X'} [xyz|\,]| = |D \cap \cN_{X} [\,|xyz]|$.
Also,
	\begin{align*}
D \cap \cN_{X} [\,|xyz] &= 
\{ w \in \mathbb{F}_2^{2\nu} \setminus \{ x,y,z,\zero \} \, | \, 
Mw = [000 {\bm 1}_{\nu}^T]^T \} \\
&=  \{ w \in \mathbb{F}_2^{2\nu} \, | \, Mw = [000 {\bm 1}_{\nu}^T]^T \} \\
&= \left\{ w \in \mathbb{F}_2^{2\nu} \, \Bigg| \,
\begin{bmatrix}
x^T K \\
y^T K \\
I_{\nu} \otimes [11]
\end{bmatrix} w = 
\begin{bmatrix} 0 \\ 0 \\ {\bm 1}_{\nu} \end{bmatrix} \right\},
	\end{align*}
and the matrix $\begin{bmatrix}
x^T K \\
y^T K \\
I_{\nu} \otimes [11]
\end{bmatrix}$ has rank $\nu + 2$, which has full rank.
Thus, we see that $|\cN_{X'} [xyz|\,]| = |D \cap \cN_{X} [\,|xyz]| = 2^{\nu-2}$.
\end{proof}

Next,
we consider the family $X^{S_4}$.
Recall that for an orbit $C \in \{ S,T,S_0 \setminus \{ S \cup T \}, S_2 \}$ of $\Aut(X)_S$
and for a vertex $v \in S_4$,
\[ |N(v) \cap C| = \begin{cases}
0 &\text{if $C = T$,} \\
|C| &\text{if $C = S$,} \\
\frac12 |C| &\text{if $C = S_0 \setminus \{S \cup T\}$ or $S_2$,}
\end{cases}
\]
by Proposition~4.7-(ii).
Also, for three vertices $x,y,z \in V(X)$,
we redefine the matrix $M$ as follows:
\[ M = \begin{bmatrix}
x^T K \\
y^T K \\
z^T K \\
v_1^T K \\
v_2^T K \\
v_3^T K
\end{bmatrix}. \]

\begin{lemm}{\it
Let $X$ be the symplectic graph $Sp(2\nu,2)$ of order $2\nu$ and 
let $X' = X^{S_4}$.
Assume that $x,y,z$ are three distinct vertices in $X'$.
If $\cN_{X'} [xyz|\,]$ is nonempty,
then $|\cN_{X'} [xyz|\,]| \geq 2^{2\nu-5}$. }
\end{lemm}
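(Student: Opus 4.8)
The plan is to run the strategy of Lemma~5.4: decompose $|\cN_{X'}[xyz|\,]|$ through Theorem~5.1 and read each summand as the number of solutions of a linear system governed by the $6\times 2\nu$ matrix $M$. First I would record, from Proposition~4.7-(ii), that for the Godsil-McKay cell $D=S_4$ we have $\mathcal{C}_0=\{T\}$, $\mathcal{C}_{\frac12}=\{S_0\setminus(S\cup T),\,S_2\}$ and $\mathcal{C}_1=\{S\}$. Writing $r=\rank M$ and using that $K$ is nonsingular while $v_1,v_2,v_3$ are linearly independent, one gets $3\le r\le 6$, and every system $Mw=c$, as well as every subsystem built from a subset of the rows of $M$, has either $0$ solutions or $2^{2\nu-r'}$ solutions for the relevant $r'\le r$.

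Next I would classify the summands of Table~\ref{tb:1}. The three large orbits $S_4$, $S_0\setminus(S\cup T)$ and $S_2$ each contribute a count of solutions of a system with coefficient matrix $M$; in particular the $S_2$-summand is a sum over the six right-hand sides $[\epsilon_x\,\epsilon_y\,\epsilon_z\,\mathbf{b}^T]^T$ with $\wt(\mathbf{b})\in\{1,2\}$, the prefix $\epsilon_x\epsilon_y\epsilon_z$ being the adjacency pattern prescribed by the case. The two small orbits $S$ and $T$ only ever enter with the pattern $[xyz|\,]$, and since switching toggles only $D$--$\mathcal{C}_{\frac12}$ edges while $S\in\mathcal{C}_1$ and $T\in\mathcal{C}_0$, one checks $N_{X'}(w)=N_X(w)$ for every $w\in S\cup T$; hence the $S$- and $T$-summands equal $|(S\cup T)\cap\cN_X[xyz|\,]|$, which is contained in the global count $|\cN_X[xyz|\,]|\in\{0,2^{2\nu-3}\}$ of Proposition~5.2. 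Thus the small orbits never need to be estimated in isolation: they are always folded into a $\cN_X[xyz|\,]$-sum that Proposition~5.2 pins down, or else dominated.

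Then I would split on $r$. If $r=6$ the matrix $M$ has full row rank, every system is consistent, and the $S_2$-summand alone equals $6\cdot 2^{2\nu-6}=3\cdot 2^{2\nu-5}$ minus the at most three of $x,y,z$ that could be solutions; since all entries of Table~\ref{tb:1} are non-negative, this forces $|\cN_{X'}[xyz|\,]|\ge 2^{2\nu-5}$ with no appeal to nonemptiness. If $r\le 5$, each consistent large-orbit system contributes a full coset of $\Ker T_M$ of size $2^{2\nu-r}\ge 2^{2\nu-5}$, so it suffices to produce one consistent large-orbit system whenever $\cN_{X'}[xyz|\,]\ne\emptyset$. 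For the cases $x,y,z\notin D$ this is immediate: the sum over all non-$D$ cells collapses to $|\cN_X[xyz|\,]|-|S_4\cap\cN_X[xyz|\,]|$, so the formula becomes $|\cN_X[xyz|\,]|$ plus a difference of two $\{0,2^{2\nu-r}\}$-terms, and a short inspection (using that $x+y+z=\zero$ forces $r\le 5$, and $x+y+z\ne\zero$ forces $x,y,z$ independent) shows every nonzero value is at least $2^{2\nu-5}$.

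The genuinely delicate part, which I expect to be the main obstacle, is the bookkeeping for the cases where one or more of $x,y,z$ lie in $D=S_4$ (cases (2)--(4) of Theorem~5.1) together with $r\le 5$. There the "non-$D$" sum no longer folds cleanly into a single $\cN_X[xyz|\,]$, and one must decide, from the linear dependences among $x,y,z,v_1,v_2,v_3$ that force $r\le 5$, exactly which systems $Mw=[\epsilon_x\,\epsilon_y\,\epsilon_z\,\mathbf{b}^T]^T$ are consistent; the criterion is that the prescribed right-hand side be orthogonal to the left kernel of $M$, i.e.\ that it respect every dependence $\sum_i\alpha_i(\text{row}_i)=\zero$. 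I would carry this out by listing, for each case, the admissible prefixes $\epsilon_x\epsilon_y\epsilon_z$ imposed by $x,y,z\in S_4$, and checking that nonemptiness of $\cN_{X'}[xyz|\,]$ always renders at least one large-orbit right-hand side consistent, so that a coset of size $\ge 2^{2\nu-5}$ survives. Lemma~4.6 can be invoked to transfer counts between $S_4$ and $S_0\setminus(S\cup T)$ wherever it shortens a case.
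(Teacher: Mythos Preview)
Your overall strategy---decompose via Table~\ref{tb:1}, read each summand as a solution set of $Mw=c$, and split on $\rank M$---is exactly the paper's. The full-rank case and the case $x,y,z\notin D$ are handled correctly along the lines you describe.

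The gap is in your planned treatment of cases (2)-(iii), (3)-(ii) and (4) with $\rank M\le 5$. Your claim that a consistent large-orbit system yields ``a coset of size $\ge 2^{2\nu-5}$'' overlooks that for the orbit $S_0\setminus(S\cup T)$ one must remove from the solution coset all vectors lying in $\langle S\rangle$. When $\rank M=5$ this can drop the count strictly below $2^{2\nu-5}$: for instance in case~(4) with $x,y,z\in S_4$ the system $Mw=\zero_6$ is consistent, but $\Ker T_M\cap\langle S\rangle=T\cup\{\zero\}$, so the $(S_0\setminus(S\cup T))$-contribution is only $2^{2\nu-\rank M}-4$. Your suggestion to use Lemma~4.6 does not address this; that lemma transfers orbit-to-orbit adjacency counts, not common-neighbor counts for a fixed triple.

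The paper closes this gap by a balancing argument you do not mention: in each of the exceptional sub-cases it checks that the number of $\langle S\rangle$-solutions removed from the $(S_0\setminus(S\cup T))$-count is exactly matched by $|(S\cup T)\cap\cN_X[xyz|\,]|$, so the two together still give at least $2^{2\nu-\rank M}\ge 2^{2\nu-5}$. In case~(4), for example, $(S\cup T)\cap\cN_X[xyz|\,]=S$ while $\Ker T_M\cap\langle S\rangle=T\cup\{\zero\}$, yielding $4+(2^{2\nu-\rank M}-4)$; analogous bijections of size $2$ or $1$ occur in the sub-cases of (2)-(iii) and in (3)-(ii). This compensation between the small-orbit term and the $\langle S\rangle$-defect of the $(S_0\setminus(S\cup T))$-term is the missing idea in your plan.
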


\begin{proof}
For three distinct vertices $x,y,z$,
we consider two cases.

Case~1: Suppose $M$ has full rank.
We consider the case (2)-(i) of Theorem~5.1 for example,
but we can consider similarly on other cases too.
Assume that $x \in D = S_4$ and $y,z \in S \cup T$.
By the case (2)-(i) of Theorem~5.1,
	\begin{align*}
|\cN_{X'}[xyz|\,]| & \geq |S_2 \cap \cN_{X}[yz|x]| \\
& = \sum_{ \substack{ {\bm b} \in \mathbb{F}_2^3, \\ \wt({\bm b}) \in \{1,2\} }}
\# \left\{
w \in \mathbb{F}_2^{2\nu} \setminus \{x,y,z,\zero\} \, \Bigg| \,
Mw = \begin{bmatrix} 0 \\ 1 \\ 1 \\ {\bm b} \end{bmatrix}
\right\}.
	\end{align*}
Clearly,
$y,z,\zero$ are not a solution of $Mw = [011 {\bm b}^T]^T$.
Also,
since $x \in S_4$,
\[ \begin{bmatrix} v_1^T K \\
v_2^T K \\
v_3^T K
\end{bmatrix} x =
\begin{bmatrix}
1 \\
1 \\
1
\end{bmatrix}. \]
Thus, $x$ is not a solution of $Mw = [011 {\bm b}^T]^T$, either.
Therefore,
since $M$ has full rank, we see that
\[ |\cN_{X'}[xyz|\,]| \geq 6 \cdot 2^{2\nu-6} \geq 2^{2\nu-5}. \]
Accordingly, $\cN_{X'} [xyz|\,]$ is always nonempty in this case.

Case~2: Suppose $M$ does not have full rank,
that is, $\rank M = 3,4$ or $5$.
We consider the case (1)-(ii) of Theorem~5.1 for example and 
we can argue similarly on other cases except three cases (2)-(iii), (3)-(ii) and (4). 
Assume that $x \in (S_0 \setminus (S \cup T)) \cup S_2$ and $y,z \in S \cup T$.
By the case (1)-(ii) of Theorem~5.1,
	\begin{align*}
\cN_{X'}[xyz|\,] = ((S \cup T) \cap \cN_{X}[xyz|\,]) \sqcup 
((S_0 \setminus (S \cup T)) \cap \cN_{X}[xyz|\,]) \\ \sqcup 
( S_2 \cap \cN_{X}[xyz|\,]) \sqcup 
( S_4 \cap \cN_{X}[yz|x]).
	\end{align*}
Since $y,z \in S_0$, $(S \cup T) \cap \cN_{X}[xyz|\,] = \emptyset$.
By $\cN_{X'}[xyz|\,] \neq \emptyset$,
one of $(S_0 \setminus (S \cup T)) \cap \cN_{X}[xyz|\,]$,
$S_2 \cap \cN_{X}[xyz|\,]$ or
$S_4 \cap \cN_{X}[yz|x]$ is nonempty.
We suppose $(S_0 \setminus (S \cup T)) \cap \cN_{X}[xyz|\,] \neq \emptyset$ first.
We see that
\[ (S_0 \setminus (S \cup T)) \cap \cN_{X}[xyz|\,]
= \{ w \in \mathbb{F}_2^{2\nu} \setminus (\{ x,y,z,\zero \} \cup S \cup T) \, | \,
Mw = [111000]^T \}, \]
but any vector in $\{ x,y,z,\zero \} \cup S \cup T$
is not a solution of $Mw = [111000]^T$ since $y,z \in S_0$.
On the other hand,
$Mw = [111000]^T$ has a solution,
so
	\begin{align*}
|(S_0 \setminus (S \cup T)) \cap \cN_{X}[xyz|\,]|
&= \# \{ w \in \mathbb{F}_2^{2\nu} \, | \, Mw = [111000]^T \} \\
&= 2^{2\nu - \rank M} \\
&\geq 2^{2\nu-5}.
	\end{align*}
Next,
we suppose $S_2 \cap \cN_{X}[xyz|\,] \neq \emptyset$.
Then there exists a vector ${\bm b}$ whose weight is 1 or 2 such that
\[ \{ w \in \mathbb{F}_2^{2\nu} \setminus \{ x,y,z,\zero \} \, | \, 
Mw = [111 {\bm b}^T]^T \} \neq \emptyset . \]
Since $x,y,z,\zero$ are not a solution of $Mw = [111 {\bm b}^T]^T$,
	\begin{align*}
| S_2 \cap \cN_{X}[xyz|\,]|
&\geq \# \{ w \in \mathbb{F}_2^{2\nu} \, | \, Mw = [111 {\bm b}^T]^T \} \\
&= 2^{2\nu - \rank M} \\
&\geq 2^{2\nu-5}.
	\end{align*}
Finally,
we suppose $S_4 \cap \cN_{X}[yz|x] \neq \emptyset$.
Then 
\[ \{ w \in \mathbb{F}_2^{2\nu} \setminus \{ x,y,z,\zero \} \, | \, Mw = [011111]^T \}
 \neq \emptyset, \]
but $x,y,z,\zero$ are not a solution of $Mw = [011111]^T$ since $x \notin S_4$.
Thus,
\[ |S_4 \cap \cN_{X}[yz|x]| = 2^{2\nu-\rank M} \geq 2^{2\nu-5}. \]
In this way,
we can basically prove that
for appropriate subsets $A$, $A'$, $A''$,
$B$, $B'$, $B'' \subset V(X)$ determined by each case of Table~\ref{tb:1},
	\begin{itemize}
	\item $(S \cup T) \cap \cN_{X}[xyz|\,] = \emptyset$,
	\item If $(S_0 \setminus (S \cup T)) \cap \cN_{X}[A|B] \neq \emptyset$,
then $|(S_0 \setminus (S \cup T)) \cap \cN_{X}[A|B]| \geq 2^{2\nu-5}$,
	\item If $S_2 \cap \cN_{X}[A'|B'] \neq \emptyset$,
then $|S_2 \cap \cN_{X}[A'|B']| \geq 2^{2\nu-5}$,
	\item If $S_4 \cap \cN_{X}[A''|B''] \neq \emptyset$,
then $|S_4 \cap \cN_{X}[A''|B'']| \geq 2^{2\nu-5}$,
	\end{itemize}
so we can see that $|\cN_{X'} [xyz|\,]| \geq 2^{2\nu-5}$ as a result.
However,
if $x,y,z \in S_2 \cup S_4$,
then $(S \cup T) \cap \cN_{X}[xyz|\,] \neq \emptyset$ can occur.
Thus,
we need other arguments on the cases (2)-(iii), (3)-(ii) and (4).
	\begin{enumerate}[(I)]
	\item The case (2)-(iii), especially, $x \in D=S_4$ and $y,z \in S_2$.
		\begin{itemize}
		\item If $y,z \in S_2(i,j)$,
then $(S \cup T) \cap \cN_{X}[xyz|\,] = \{v_i, v_j\}$,
but $S_0 \cap \cN_{X}[yz|x] \cap \SpanS = \{ v_i+v_k, v_j+v_k \}$
for $k \in [4] \setminus \{i,j\}$,
so 
\[ |\cN_{X'} [xyz|\,]| \geq 2 + (2^{2\nu - \rank M} - 2) \geq 2^{2\nu-5}. \]
		\item If $y \in S_2(i,j)$ and $z \in S_2(i,k)$ for distinct indices $i,j,k$,
then $(S \cup T) \cap \cN_{X}[xyz|\,] = \{v_i\}$,
but $S_0 \cap \cN_{X}[yz|x] \cap \SpanS = \{ v_j+v_k \}$,
so 
\[ |\cN_{X'} [xyz|\,]| \geq 1 + (2^{2\nu - \rank M} - 1) \geq 2^{2\nu-5}. \]
		\item If $y \in S_2(i,j)$ and $z \in S_2(k,l)$ for distinct indices $i,j,k,l$,
then $(S \cup T) \cap \cN_{X}[xyz|\,] = \emptyset$.
Thus,
this case is no problem because we can use a ``basis'' argument.
		\end{itemize}
	\item The case (3)-(ii), especially, $x,y \in D=S_4$ and $z \in S_2$.
Assume that $z \in S_2(i,j)$.
Then $(S \cup T) \cap \cN_{X}[xyz|\,] = \{v_i,v_j\}$,
but $S_0 \cap \cN_{X}[z|xy] \cap \SpanS = \{ v_i+v_k, v_j+v_k \}$
for $k \in [4] \setminus \{i,j\}$,
so 
\[ |\cN_{X'} [xyz|\,]| \geq 2 + (2^{2\nu - \rank M} - 2) \geq 2^{2\nu-5}. \]
	\item The case (4).
Then $(S \cup T) \cap \cN_{X}[xyz|\,] = S$,
but $S_0 \cap \cN_{X}[\,|xyz] \cap \SpanS = T \cup \{ \zero \}$,
so 
\[ |\cN_{X'} [xyz|\,]| \geq 4 + (2^{2\nu - \rank M} - 4) \geq 2^{2\nu-5}. \]
	\end{enumerate}
Consequently,
we can get the desired inequality for all cases.
\end{proof}

\begin{prop}{\it
Let $X$ be the symplectic graph $Sp(2\nu,2)$ of order $2\nu$ and let $X' = X^{S_4}$.
Then the non-zero minimum number of common neighbors 
of three vertices in $X^{S_4}$ is $2^{2\nu-5}$. }
\end{prop}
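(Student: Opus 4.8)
The lower bound is already available from Lemma~5.6, which tells us that every nonempty set of common neighbors $\cN_{X'}[xyz|\,]$ in $X' = X^{S_4}$ satisfies $|\cN_{X'}[xyz|\,]| \geq 2^{2\nu-5}$; hence the non-zero minimum is at least $2^{2\nu-5}$. So the plan is to produce a single triple of vertices attaining this value. I would reuse the triple of Proposition~5.3: take $x \in S_2(1,2)$, $y \in S_2(1,3)$ and set $z = x+y$, so that $z \in S_2(2,3)$ and $x+y+z = \zero$.

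First I would observe that $x,y,z$ all lie in $S_2$, which is one of the cells making up $\mathcal{C}_{\frac12}$ for the Godsil-McKay cell $D = S_4$, and that none of them lies in $D$; therefore the applicable row of Table~\ref{tb:1} is case (1)-(iv). Since $x+y+z = \zero$, Proposition~5.2 gives $\cN_X[xyz|\,] = \emptyset$, so the sum $\sum_{i} |C_i \cap \cN_X[xyz|\,]|$ in that row vanishes entirely and the formula collapses to
\[ |\cN_{X'}[xyz|\,]| = |S_4 \cap \cN_X[\,|xyz]|. \]
Thus the whole problem reduces to counting the vertices $w \in S_4$ with $w \not\sim x, y, z$.

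The next step is the key simplification: the requirement $w \not\sim z$ is redundant, because $z = x+y$ forces $z^T K w = x^T K w + y^T K w$, so $w \not\sim x$ and $w \not\sim y$ already imply $w \not\sim z$. Writing out the remaining conditions together with the defining equations $v_1^T K w = v_2^T K w = v_3^T K w = 1$ of $S_4$, I would identify $S_4 \cap \cN_X[\,|xyz]$ as the solution set
\[ \{ w \in \mathbb{F}_2^{2\nu} \mid v_1^T K w = v_2^T K w = v_3^T K w = 1,\ x^T K w = y^T K w = 0 \}, \]
whose $5 \times 2\nu$ coefficient matrix $M'$ has rank $5$ precisely when $v_1,v_2,v_3,x,y$ are linearly independent. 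This is where I would spend the care: since $\SpanS \setminus \{\zero\} = S \cup T \subset S_0$ is disjoint from $S_2$, none of $x$, $y$, $x+y = z$ lies in $\SpanS$, so $x$ and $y$ are independent modulo $\SpanS$ and the five vectors are indeed independent. The system is then consistent with exactly $2^{2\nu-5}$ solutions, and checking the values $v_i^T K x$ and $v_i^T K y$ shows that $x,y,z,\zero$ are not among them. Hence $|\cN_{X'}[xyz|\,]| = 2^{2\nu-5}$.

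Combining this with the lower bound of Lemma~5.6 finishes the proof. The main obstacle is not any single computation but the bookkeeping that isolates a single surviving term from Table~\ref{tb:1}: the two structural facts $x+y+z = \zero$ (which both selects case (1)-(iv) and annihilates $\cN_X[xyz|\,]$) and $\dim\langle v_1,v_2,v_3,x,y\rangle = 5$ (which pins the count at $2^{2\nu-5}$) are what make everything collapse, and verifying the second of these --- that $x,y$ stay independent after adjoining $\SpanS$, using $x+y=z\notin\SpanS$ --- is the step I would state most explicitly.
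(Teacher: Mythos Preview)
Your argument is correct and follows the same route as the paper: pick a linearly dependent triple in $S_2$ so that case (1)-(iv) of Table~\ref{tb:1} applies and $\cN_X[xyz|\,]=\emptyset$, leaving only $|S_4\cap\cN_X[\,|xyz]|$, then verify that $v_1,v_2,v_3,x,y$ are independent to get the count $2^{2\nu-5}$. The only cosmetic difference is that you reuse the triple from Proposition~5.3 (with $x\in S_2(1,2)$, $y\in S_2(1,3)$) whereas the paper relabels it as $x\in S_2(1,2)$, $y\in S_2(2,3)$; your explicit justification of the independence via $x,y,z\notin\SpanS$ is a nice touch that the paper leaves implicit.
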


\begin{proof}
We take $x \in S_2(1,2)$ and $y \in S_2(2,3)$ and set $z=x+y$.
Then $z \in S_2(1,3)$,
so by the case (1)-(iv) of Theorem~5.1,
we get
\[ |\cN_{X'}[xyz|\,]| = | (S_0 \cup S_2) \cap \cN_X[xyz|\,] | + |S_4 \cap \cN_{X}[\,|xyz]|. \]
Since $x+y+z = \zero$,
the first term of the right hand side is zero by Proposition~5.2.
Thus,
	\begin{align*}
|\cN_{X'}[xyz|\,]| &= |S_4 \cap \cN_{X}[\,|xyz]| \\
&= \# \{ w \in \mathbb{F}_2^{2\nu} \, | \, Mw = [000111]^T \} \\
&= \# \left\{ 
w \in \mathbb{F}_2^{2\nu} \, \Bigg| \, \begin{bmatrix} 
x^T K \\ y^T K \\ v_1^T K \\ v_2^T K \\ v_3^T K \end{bmatrix} w =
\begin{bmatrix} 0 \\ 0 \\ 1 \\ 1 \\ 1 \end{bmatrix} \right\},
	\end{align*}
but $x,y,v_1,v_2,v_3$ are linearly independent.
Therefore,
$\begin{bmatrix} x^T K \\ y^T K \\ v_1^T K \\ v_2^T K \\ v_3^T K \end{bmatrix}$ 
has full rank,
so we see that $|\cN_{X'}[xyz|\,]| = 2^{2\nu-5}$.
\end{proof}

Finally,
we consider the family $X^{S_0 \setminus (S \cup T)}$.
Recall that for an orbit $C \in \{ S,T, S_2, S_4 \}$ of $\Aut(X)_S$
and for a vertex $v \in S_4$,
\[ |N(v) \cap C| = \begin{cases}
0 &\text{if $C = S$ or $T$,} \\
\frac12 |C| &\text{if $C = S_2$ or $S_4$,}
\end{cases}
\]
by Proposition~4.7-(iii).
We prove that the non-zero minimum number of common neighbors 
of three vertices in $X^{S_0 \setminus (S \cup T )}$ is $2^{2\nu-5}-2$,
but its proof is similar to the one in $X^{S_4}$ basically,
that is,
we can see the following for appropriate subsets 
$A$,$A'$, $A''$, $B$, $B'$, $B'' \subset V(X)$
determined by each case of Table~\ref{tb:1}.
	\begin{enumerate}[(I)]
	\item When $M$ has full rank, we see that
\[ |\cN_{X'}[xyz|\,]| \geq |S_2 \cap \cN_X [A|B]| \geq 2^{2\nu-5} \geq 2^{2\nu-5}-2. \]
	\item When $M$ does not have full rank,
we can prove the following 
except the case (1)-(iv).
		\begin{itemize}
		\item $(S \cup T) \cap \cN_{X}[xyz|\,] = \emptyset$,
		\item If $S_2 \cap \cN_{X}[A|B] \neq \emptyset$,
then $|S_2 \cap \cN_{X}[A|B]| \geq 2^{2\nu-5}$,
		\item If $S_4 \cap \cN_{X}[A'|B'] \neq \emptyset$,
then $|S_4 \cap \cN_{X}[A'|B']| \geq 2^{2\nu-5}$,
		\item If $(S_0 \setminus (S \cup T)) \cap \cN_{X}[A''|B''] \neq \emptyset$,
then $|(S_0 \setminus (S \cup T)) \cap \cN_{X}[A''|B'']| \geq 2^{2\nu-5}$.
		\end{itemize}
	\end{enumerate}
The exceptional case (1)-(iv) is proved as follows.

\begin{lemm}
Let $X$ be the symplectic graph $Sp(2\nu,2)$ of order $2\nu$ and 
let $X' = X^{S_0 \setminus (S \cup T )}$.
Suppose that $x,y,z \in S_2 \cup S_4$ and $M$ does not have full rank.
Then $|\cN_{X'}[xyz|\,]| \geq 2^{2\nu-5} - 2$.
\end{lemm}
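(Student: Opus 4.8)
The plan is to imitate the bookkeeping already used for $X^{S_4}$, but to compute the corrections coming from $S\cup T$ exactly, since here the target $2^{2\nu-5}-2$ is no longer a power of two. For switching with Godsil-McKay cell $D=S_0\setminus(S\cup T)$, Proposition~4.7-(iii) gives $\mathcal{C}_0=\{S,T\}$, $\mathcal{C}_{\frac12}=\{S_2,S_4\}$ and $\mathcal{C}_1=\emptyset$, so the hypothesis $x,y,z\in S_2\cup S_4$ places us exactly in case (1)-(iv) of Theorem~5.1, and Table~\ref{tb:1} gives
\[ |\cN_{X'}[xyz|\,]| = |(S\cup T\cup S_2\cup S_4)\cap\cN_X[xyz|\,]| + |D\cap\cN_X[\,|xyz]|. \]
First I would translate both terms into counts of solutions of $Mw=c$ with $c\in\mathbb{F}_2^6$, recalling that $\{w:Mw=c\}$ has $2^{2\nu-r}$ elements when $c$ lies in $\mathrm{Im}\,T_M=\{Mw:w\in\mathbb{F}_2^{2\nu}\}$ and is empty otherwise, where $r=\rank M$.

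Writing $N=2^{2\nu-r}$, I would next establish the identity
\[ |\cN_{X'}[xyz|\,]| = N(m+1) + (k'-k) - 1, \]
where $m=\#\{{\bm b}\in\mathbb{F}_2^3\setminus\{\zero\}:[1\,1\,1\,{\bm b}^T]^T\in\mathrm{Im}\,T_M\}$, and $k$ (resp.\ $k'$) is the number of $w\in S\cup T$ with $x^TKw=y^TKw=z^TKw=0$ (resp.\ $=1$). The two structural facts that make this work are that $\SpanS$ is totally isotropic and that $S\cup T\subseteq S_0$, so every $w\in S\cup T$ has vanishing $v_1,v_2,v_3$-coordinates in $Mw$: the common-neighbor term then contributes $Nm$ from $S_2\cup S_4$ and $k'$ from $S\cup T$, while the Godsil-McKay term contributes $N-1-k$ (subtracting $\zero$ together with the $k$ vertices of $S\cup T$ lying in $\Ker T_M$). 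Here $x,y,z$ need no separate exclusion, since they satisfy neither $x^TKx=1$ (relevant for the common-neighbor systems) nor $x\in S_0$ (relevant for the all-zero system).

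Since $M$ is not full rank and $x\notin\SpanS$, we have $r\in\{4,5\}$, and I would split on the unique dependency $\alpha_1 x+\alpha_2 y+\alpha_3 z+\gamma_1 v_1+\gamma_2 v_2+\gamma_3 v_3=\zero$ allowed by the one- or two-dimensional relation space. When $r=4$ one gets $y=x+u$, $z=x+u'$ with $u,u'\in S\cup T$, and total isotropy forces $x^TKw=y^TKw=z^TKw$ for every $w\in S\cup T$, so $k+k'=7$; moreover $k=3$ because $w\mapsto x^TKw$ is a nonzero functional on the $3$-dimensional space $\SpanS$, hence $k'-k=1$ and $|\cN_{X'}[xyz|\,]|=N(m+1)\geq 2^{2\nu-4}$. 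When $r=5$ with $(\gamma_1,\gamma_2,\gamma_3)\neq\zero$, a one-line count of the functional ${\bm b}\mapsto\gamma\cdot{\bm b}$ gives $m\geq 3$, whence $|\cN_{X'}[xyz|\,]|\geq 4N+(k'-k)-1\geq 2^{2\nu-3}-8\geq 2^{2\nu-5}-2$ for $\nu\geq 3$.

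The tight case, producing the exact $-2$, is $r=5$ with $\gamma=\zero$, which forces $x+y+z=\zero$: here $m=0$, the adjacency system $[1\,1\,1\,*]$ is inconsistent so $k'=0$, and $k=1$ because $w\mapsto x^TKw$ and $w\mapsto y^TKw$ are distinct nonzero functionals on $\SpanS$ and so have a one-dimensional common kernel; substituting yields exactly $N-2=2^{2\nu-5}-2$. I expect the main obstacle to be precisely this exact evaluation of $k$ and $k'$: the crude estimate $k'-k\geq -7$ suffices whenever $m\geq 3$, but in the rank-$5$ three-term relation it fails, and one must verify on the nose that the configuration $x+y+z=\zero$ loses exactly two common neighbors, so that the bound is both attained and cannot be improved.
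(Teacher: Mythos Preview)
Your argument is correct, and it takes a genuinely different route from the paper's. The paper does not derive an exact identity: it simply drops the $(S_2\cup S_4)$-contribution and bounds
\[ |\cN_{X'}[xyz|\,]| \geq |(S\cup T)\cap\cN_X[xyz|\,]| + |(S_0\setminus(S\cup T))\cap\cN_X[\,|xyz]|, \]
then splits on $d=\dim(\Ker T_M\cap\SpanS)\in\{0,1,2\}$. For $d\leq 1$ the second term alone already gives $2^{2\nu-5}-2$, and for $d=2$ a short structural analysis of the plane $\Ker T_M\cap\SpanS$ (Case~1: it contains some $v_i$, which forces $x,y,z\in S_2(k,l)$ for a fixed pair; Case~2: it equals $\{\zero\}\cup T$, which forces $x,y,z\in S_4$) shows in either case that $|(S\cup T)\cap\cN_X[xyz|\,]|=4$, compensating for the extra loss. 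Your organisation instead keeps the full identity $|\cN_{X'}[xyz|\,]| = N(m+1)+(k'-k)-1$ and splits on the rank $r$ of $M$ together with the $\gamma$-component of the unique relation; the two case-splits do not match up (for instance $r=5$ with $\gamma\neq\zero$ meets both $d=1$ and $d=2$, while $r=4$ always lands in $d=2$), but both are exhaustive. What your approach buys is the exact value $2^{2\nu-5}-2$ in the tight sub-case $x+y+z=\zero$, so that Proposition~5.9 falls out for free; what the paper's approach buys is that one never needs to compute $m$ or invoke the image description of $T_M$ at all.
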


\begin{proof}
By the case (1)-(iv) of Theorem~5.1,
\[ |\cN_{X'}[xyz|\,]| \geq 
|(S \cup T) \cap \cN_{X}[xyz|\,]| + |(S_0 \setminus (S \cup T) \cap \cN_{X}[\,|xyz]|. \]
Since $x,y,z$ are not a solution of the system of equations $Mw = \zero_6$,
\[ (S_0 \setminus (S \cup T)) \cap \cN_{X}[\,|xyz]
= \{ w \in F_2^{2\nu} \,|\, Mw = \zero_6 \} \setminus \SpanS. \]
Thus,
	\begin{align*}
|(S_0 \setminus (S \cup T)) \cap \cN_{X}[\,|xyz]| 
&= |\Ker T_M| - |\Ker T_M \cap \SpanS| \\
&\geq 2^{2\nu-5} - |\Ker T_M \cap \SpanS|,
	\end{align*}
but since $x,y,z \in S_2 \cup S_4$,
$\dim \Ker T_M \cap \SpanS \leq 2$.
If $\dim \Ker T_M \cap \SpanS = 0$ or $1$,
then we can get the desired inequality,
so we assume $\dim \Ker T_M \cap \SpanS = 2$.
We aim to prove that $|(S \cup T) \cap \cN_{X}[xyz|\,]| \geq 2$ in this case.
(Actually, we can prove $|(S \cup T) \cap \cN_{X}[xyz|\,]| = 4$.)
Observe that there exist two distinct indices $i,j \in [4]$ such that 
$Mv_i \neq \zero_6$ and $Mv_j \neq \zero_6$.
Since $|\Ker T_M \cap \SpanS| = 4$,
there exist two distinct indices $k,l \in [4]$ such that 
$v_k + v_l \in \Ker T_M \cap \SpanS$.
We can assume $k=1, l=2$ without loss of generality.
It is sufficient to check the following two cases.

Case 1:
Suppose that $\Ker T_M \cap \SpanS = \langle v_1+v_2, v_1 \rangle$.
Observe $x,y,z \in S_2(3,4)$,
and we see that
\[ |(S \cup T) \cap \cN_{X}[xyz|\,]| = \# \{ v_3,v_4,v_1+v_3,v_2+v_3 \} = 4. \]

Case 2:
Suppose that $\Ker T_M \cap \SpanS = \langle v_1+v_2, v_2+v_3 \rangle$.
If we suppose $z \in S_2(i,j)$,
then $z^T K (v_i+v_k) = 1$ for $k \in [4] \setminus \{i,j\}$,
but this is a contradiction.
Thus,
$x,y,z$ have to be in $S_4$.
Consequently,
\[ |(S \cup T) \cap \cN_{X}[xyz|\,]| = \# \{ v_1,v_2,v_3,v_4 \} = 4. \]
\end{proof}

\begin{prop}{\it
Let $X$ be the symplectic graph $Sp(2\nu,2)$ of order $2\nu$ and 
let $X' = X^{S_0 \setminus (S \cup T)}$.
Then the non-zero minimum number of common neighbors 
of three vertices in $X^{S_0 \setminus (S \cup T)}$ is $2^{2\nu-5}-2$. }
\end{prop}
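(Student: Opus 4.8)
The lower bound is already in hand: the cases (I) and (II) in the discussion preceding this proposition, together with Lemma~5.8, establish that $|\cN_{X'}[xyz|\,]| \geq 2^{2\nu-5}-2$ whenever $\cN_{X'}[xyz|\,]$ is nonempty. So the plan is to exhibit a single triple $x,y,z$ that attains equality; once the triple is chosen correctly, everything collapses onto one clean kernel computation on the Godsil-McKay cell $D = S_0 \setminus (S \cup T)$.

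I would reuse the extremal triple from Proposition~5.7: take $x \in S_2(1,2)$ and $y \in S_2(2,3)$ with $x,y,v_1,v_2,v_3$ linearly independent, and set $z = x+y$, so that $z \in S_2(1,3)$. Then $x,y,z$ are three distinct vertices of $S_2$, hence lie outside $D$ and have indices in $\mathcal{C}_{\frac12}$; this places us in case (1)-(iv) of Theorem~5.1. Because $x+y+z = \zero$, Proposition~5.2 gives $\cN_X[xyz|\,] = \emptyset$, so the first sum in the case (1)-(iv) formula vanishes and I am left with
\[ |\cN_{X'}[xyz|\,]| = |D \cap \cN_X[\,|xyz]| = |\Ker T_M| - |\Ker T_M \cap \SpanS|, \]
the last identity being the one used in Lemma~5.8 (valid since each of $x,y,z$ meets some $v_i$ and so none lies in $\Ker T_M$).

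It then remains to evaluate the two terms, and this is where I expect the real work. Since $z = x+y$, the third row of $M$ is the sum of the first two, so $\rank M \leq 5$; the choice above forces $\rank M = 5$ and hence $|\Ker T_M| = 2^{2\nu-5}$. The independence of $x,y,v_1,v_2,v_3$ can be seen cleanly: every vector of $\langle v_1,v_2,v_3,x\rangle$ has ``type'' of the form $(a,a,0)$ (because $x$ has type $(1,1,0)$ and the $v_i$ are mutually orthogonal), so none of them can have the type $(0,1,1)$ of $S_2(2,3)$, and thus any admissible $y$ is automatically independent of $x,v_1,v_2,v_3$. For the intersection with $\SpanS$, I would run through the seven nonzero vectors of $\SpanS$, testing orthogonality against $x,y,z$ via their types $(1,1,0),(0,1,1),(1,0,1)$: each of $v_1,v_2,v_3,v_1+v_2,v_2+v_3,v_3+v_1$ fails against at least one of the three, while $v_4 = v_1+v_2+v_3$ is orthogonal to all three. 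Hence $\Ker T_M \cap \SpanS = \langle v_4\rangle$ has exactly two elements, giving $|\cN_{X'}[xyz|\,]| = 2^{2\nu-5}-2$ and matching the lower bound.

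The main obstacle is precisely these two small type computations: confirming that $\rank M$ is exactly $5$ (a genuine choice of $x,y$ must be justified, not merely assumed) and that $v_4$ is the \emph{unique} nonzero element of $\SpanS$ orthogonal to the entire triple. The latter is what pins $\dim(\Ker T_M \cap \SpanS) = 1$, and that dimension count is exactly the dividing line between the extremal value $2^{2\nu-5}-2$ obtained here and the larger value $2^{2\nu-5}$ arising in the $\dim = 2$ subcase of Lemma~5.8; getting this dimension right is the crux of the whole proposition.
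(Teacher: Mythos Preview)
Your argument is correct and follows essentially the same route as the paper: exhibit a linearly dependent triple in the $\mathcal{C}_{\frac12}$ orbits so that case (1)-(iv) collapses to $|D\cap\cN_X[\,|xyz]| = |\Ker T_M|-|\Ker T_M\cap\SpanS| = 2^{2\nu-5}-2$. The only difference is the choice of witness: the paper takes $y\in S_2(1,2)$, $z\in S_2(3,4)$, $x=y+z\in S_4$ (so the nonzero element of $\Ker T_M\cap\SpanS$ is $v_1+v_2\in T$), whereas you reuse the Proposition~5.7 triple with all three in $S_2$ (so the nonzero element is $v_4\in S$); both yield $\rank M=5$ and $\dim(\Ker T_M\cap\SpanS)=1$, and your more detailed justification of these two facts is a welcome expansion of the paper's terse ``we can check.''
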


\begin{proof}
Take $y \in S_2(1,2)$ and $z \in S_2(3,4)$ and set $x = y+z$.
Then $x \in S_4$ and by using the case (1)-(iv) of Theorem~5.1,
we can check $|\cN_{X'}[xyz|\,]| = 2^{2\nu-5}-2$.
\end{proof}

Summarizing this subsection,
we get the following:

\begin{theo}{\it 
Let $X$ be the symplectic graph $Sp(2\nu,2)$ of order $2\nu$.
The non-zero minimum numbers of common neighbors of three distinct vertices
for each graphs are given in the Table~5.2.
	\begin{center}
	\begin{tabular}{|c|c|c|c|c|} \hline
$X$ & $X^S$ & $X^{O(0,\nu,0)}$ & $X^{S_4}$ & $X^{S_0 \setminus (S \cup T)}$ \\ \hline
$2^{2\nu-3}$ & $1$ & $2^{\nu-2}$ & $2^{2\nu-5}$ & $2^{2\nu-5}-2$ \\ \hline
	\end{tabular}
	\end{center}
	\begin{center}\emph{
Table 5.2. The non-zero minimum numbers of common neighbors of 
three distinct vertices}
	\end{center}

In particular,
the five graphs
$X,X^{O(0,\nu,0)}, X^S, X^{S_4}, X^{S_0 \setminus (S \cup T)}$
are not isomorphic to each other for a fixed $\nu$.
}
\end{theo}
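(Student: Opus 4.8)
The plan is to treat the non-zero minimum of $|\cN_Y[xyz|\,]|$, taken over all triples of distinct vertices $x,y,z$, as an isomorphism invariant, and then to observe that the five graphs realize five different such values. The first half of the theorem --- the entries of Table~5.2 --- requires no new computation: the five values $2^{2\nu-3}$, $1$, $2^{\nu-2}$, $2^{2\nu-5}$ and $2^{2\nu-5}-2$ are precisely the conclusions of Propositions~5.2, 5.3, 5.5, 5.7 and 5.9, so the proof begins by simply quoting these.

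The observation I would establish first is that the non-zero minimum common-neighbor count is preserved by any graph isomorphism. If $\phi\colon Y \to Y'$ is an isomorphism and $x,y,z \in V(Y)$ are distinct, then $\phi$ maps $\cN_Y[xyz|\,]$ bijectively onto $\cN_{Y'}[\phi(x)\phi(y)\phi(z)|\,]$, since $\phi$ preserves both adjacency and non-adjacency; hence $|\cN_Y[xyz|\,]| = |\cN_{Y'}[\phi(x)\phi(y)\phi(z)|\,]|$. Letting $(x,y,z)$ range over all triples of distinct vertices shows that $Y$ and $Y'$ carry the same multiset of common-neighbor counts, and in particular the same smallest positive value. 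Therefore two of the five graphs can be isomorphic only if their Table~5.2 entries agree, and it suffices to check that the five entries are pairwise distinct.

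The remaining work is the arithmetic separation of $2^{2\nu-3}$, $1$, $2^{\nu-2}$, $2^{2\nu-5}$ and $2^{2\nu-5}-2$, and this is where I expect the only genuine difficulty to lie. For $\nu \geq 4$ the separation is clean: $2^{2\nu-3}$ strictly exceeds the other four, $1$ lies strictly below them, $2^{2\nu-5}$ and $2^{2\nu-5}-2$ differ by $2$, and the equality $2^{\nu-2} = 2^{2\nu-5}$ forces $\nu-2 = 2\nu-5$, i.e. $\nu = 3$; so for $\nu \geq 4$ the invariant immediately yields pairwise non-isomorphism. The boundary case $\nu = 3$ is exactly where the invariant degenerates: there $2^{\nu-2} = 2^{2\nu-5} = 2$, so $X^{O(0,3,0)}$ and $X^{S_4}$ are not separated by it, and $2^{2\nu-5}-2 = 0$ is not a legitimate non-zero minimum for $X^{S_0 \setminus (S \cup T)}$. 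I would therefore isolate $\nu = 3$ at the outset and treat it separately --- either by refining the invariant (for instance the second-smallest positive common-neighbor count, or the full distribution of counts produced by Theorem~5.1) or by a direct comparison of these graphs on $63$ vertices --- while the uniform argument above disposes of every $\nu \geq 4$.
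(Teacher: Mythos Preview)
Your approach is exactly the paper's: Theorem~5.10 carries no separate proof there, being presented as a summary of Propositions~5.2, 5.3, 5.5, 5.7 and 5.9, with the tacit understanding that the non-zero minimum of $|\cN_Y[xyz|\,]|$ is an isomorphism invariant and that the five tabulated values are pairwise distinct. Your explicit verification of the invariance and of the arithmetic separation for $\nu \geq 4$ simply spells out what the paper leaves implicit.

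You have, however, noticed something the paper glosses over. At $\nu = 3$ the entries $2^{\nu-2}$ and $2^{2\nu-5}$ coincide (both equal $2$), and $2^{2\nu-5}-2 = 0$ cannot serve as a non-zero minimum, so Proposition~5.9 does not even produce a valid table entry for $X^{S_0\setminus(S\cup T)}$ in that case. The paper works under the standing hypothesis $\nu \geq 3$ but supplies no argument covering this boundary value; thus, strictly speaking, the final non-isomorphism clause is justified by the paper's own reasoning only for $\nu \geq 4$. Your plan to handle $\nu = 3$ via a refined invariant or a direct check on $63$ vertices is the right instinct, though it remains unexecuted---and the paper does not execute it either.
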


\end{document}